\let\c@table\c@figure
\title{An Exact Redatuming Procedure for the Inverse Boundary Value Problem for the Wave Equation}
\author{Maarten V. de  Hoop\ \footnotemark[2]
\and Paul Kepley\ \footnotemark[3]
\and Lauri Oksanen \footnotemark[4]}
\newcounter{notecounter}
\newcommand{\norm}[1]{\left\|#1 \right\|}
\def\R{\mathbb{R}}
\def\C{\mathbb{C}}
\def\N{\mathbb{N}}
\def\p{\partial}
\def\curlyL{\mathcal{L}}
\def\curlyK{\mathcal{K}}
\def\Wint{W_{\text{int}}}
\def\supp{\text{supp}}
\DeclareMathOperator*{\argmin}{argmin}
\DeclareMathOperator{\linearSpan}{span}
\DeclareMathOperator\SecondFundamentalForm{II}
\def\LaplaceBeltrami{\Delta_{g}}
\def\pND{\p_{\nu}}
\def\boundaryConds{\pND}
\def\dS{dS_g}
\def\dtdS{dt\otimes\dS}
\newenvironment{namedProblem}[1]{
  \begin{itemize}[leftmargin=0.75in, rightmargin=0.75in, topsep=6pt]
  \item[(#1)] \itshape
}{
  \end{itemize}
}
\begin{document}

\maketitle

\renewcommand{\thefootnote}{\fnsymbol{footnote}} 

\footnotetext[1]{An extended abstract covering portions of this paper
  appeared in the SEG Technical Program Expanded Abstracts 2016, doi:
  \url{http://dx.doi.org/10.1190/segam2016-13966222.1}.}

\footnotetext[2]{Simons Chair in Computational and Applied Mathematics
  and Earth Science, Rice University, Houston TX 77005, USA
  ({mdehoop@rice.edu}). The work of this author was supported in part
  by the Simons Foundation, NSF grant DMS-1559587, and members of the
  Geo-Mathematical Imaging Group at Rice University.}

\footnotetext[3]{Department of Mathematics, Purdue University, West
  Lafayette, IN 47907 ({pkepley@purdue.edu}). The work of this author
  was supported in part by the Geo-Mathematical Imaging Group at Rice
  University.}

\footnotetext[4]{Department of Mathematics, University College London,
  Gower Street, London WC1E 6BT, UK ({l.oksanen@ucl.ac.uk}).  The work
  of this author was partially supported by the Engineering and
  Physical Sciences Research Council (UK) grant EP/L026473/1.}
\renewcommand{\thefootnote}{\arabic{footnote}}

\pagestyle{myheadings} 

\thispagestyle{plain} 

\markboth{M.V. DE HOOP, P. KEPLEY, AND L. OKSANEN}{EXACT REDATUMING FOR THE WAVE EQUATION}

\begin{abstract}
Redatuming is a data processing technique to
transform measurements recorded in one acquisition geometry
to an analogous data set corresponding to another acquisition geometry, for which there are no recorded measurements. 
We consider a redatuming problem for a wave equation on a bounded domain, or on a manifold with boundary, 
and  model data acquisition by a restriction of the associated Neumann-to-Dirichlet map.
This map models measurements with 
sources and receivers on an open subset $\Gamma$
contained in the boundary of the manifold. We model the wavespeed by a Riemannian metric, and suppose that the
metric is known in some coordinates in a neighborhood of $\Gamma$.
Our goal is to move sources and receivers into this known near boundary
region. We formulate redatuming as a collection of unique
continuation problems, and provide a two step procedure to solve the
redatuming problem. We investigate the stability of the first step
in this procedure, showing that it enjoys conditional H\"older
stability under suitable geometric hypotheses. In addition, we
provide computational experiments that demonstrate our redatuming
procedure.
\end{abstract}

\begin{keywords}
  Boundary Control method, redatuming,  wave equation
\end{keywords}

\begin{AMS}
  35R30, 35L05
\end{AMS}

\section{Introduction}

We consider an exact redatuming procedure for the inverse boundary
value problem for the wave equation.
We let $M$ be a bounded domain in $\R^n$, or more generally a smooth manifold with boundary, and assume that its boundary $\p M$ is smooth. Then, we consider
the wave equation
\begin{align}
  \label{eqn:1stForwardProb}
  (\partial_t^2 - \LaplaceBeltrami)u(t,x) &= 0, & (t,x) \in (0, \infty) \times M,\\ 
  \boundaryConds u(t,x) &= f(t,x), & (t,x) \in (0,\infty) \times \p M,\notag\\
  u(0,x) = \partial_t u(0,x) &= 0, & x \in M, \notag
\end{align}
where $\Delta_g$ denotes the Laplace-Beltrami operator for a metric tensor $g$ on $M$. Let us remark that, in the case of a domain, this Riemannian formulation allows us to consider both the cases of isotropic and elliptically anisotropic wavespeeds.   We suppose that the metric $g$ is known, for some fixed $r > 0$
and in some fixed coordinates, in the domain of influence
$M(\Gamma,r)$, defined by:
\begin{equation}
    M(\Gamma,r) := \{ x \in M : d(x,\Gamma) \leq r\}.
\end{equation}
Outside of this set, the metric will be assumed to be unknown. 
We suppose that $g$ is smooth in $M(\Gamma,r)$, but allow for $g$ to possess singularities of
conormal type in the complement of this set.


The term redatuming comes from the seismic literature, where it is
used to refer to procedures to synthesize
measurements for another set where data has not been recorded (see e.g. \cite{Mulder2005}). In the
present setting, we suppose that data has been collected on an open
subset $\Gamma \subset \p M$ in the form of the Neumann-to-Dirichlet
map (N-to-D map). Specifically, we suppose that for a fixed fixed time
$T > 0$, we have the N-to-D map $\Lambda_\Gamma^{2T}$, defined by:
\begin{equation*}
  \Lambda_\Gamma^{2T} f = u^f|_{(0,2T) \times \Gamma}, \quad f \in
  C_0^\infty((0,2T) \times \Gamma)
\end{equation*}
where $u^f$ is the solution of (\ref{eqn:1stForwardProb}).
Let $\Omega \subset M(\Gamma, r)$ be the set into which we would like to ``move'' the sources and receivers.
To make this precise, let $F$ be an interior source supported in
$[0,T/2] \times \Omega$, and let $w^F$ solve
\begin{align}
  \label{eqn:2ndForwardProb}
  (\partial_t^2 - \LaplaceBeltrami) w(t,x) &= F(t,x), & (t,x) \in (0, \infty) \times M,\\ 
  \boundaryConds w(t,x) &= 0, & (t,x) \in (0,\infty) \times \p M, \notag\\
  w(0,x) = \partial_t w(0,x) &= 0, & x \in M.\notag
\end{align}
We define the map:
\begin{equation}
  \curlyL : F \mapsto w^F|_{[0,T/2] \times \Omega}, \qquad \text{for $F \in L^2([0,T/2] \times \Omega)$.}
\end{equation}
Then, redatuming into $\Omega$ can be accomplished by constructing
the map $\curlyL$ using the data $\Lambda_{\Gamma}^{2T}$ and
$g|_{M(\Gamma,r)}$.  Thus the central focus of this paper is the
following problem:
\begin{namedProblem}{P}
  Given $\Lambda_\Gamma^{2T}$ and $g|_{M(\Gamma,r)}$, determine the map $\curlyL$.
\end{namedProblem}
In Section \ref{sec:MovingData} we develop an algorithm to solve problem
(P) constructively.


Our primary motivation for studying the problem (P) stems from the
fact that it arises as a step in several variations of the Boundary Control (BC) method, see \cite{Belishev1987} for the original formulation of the method. In
theory, the BC method allows one to reconstruct $(M,g)$ given
$\Lambda_\Gamma^{2T}$ for $T > \max_{x \in M} d(x,\Gamma)$. This
reconstruction is based on a layer stripping argument, for which the
first step is to recover $g$ in the semigeodesic coordinates of
$\Gamma$. 
As these coordinates do not cover the whole $M$,
we refer to this procedure as the {\em local recovery step}. 
The second step is to
solve the redatuming problem (P), and consequently we refer to this
step as the {\em redatuming step}. Solving Problem (P)
allows one to propagate the data $\Lambda_\Gamma^{2T}$ into the interior
of $M$ and thus enables one to repeat the local recovery step with
data in the interior.
By alternating between the local recovery and redatuming steps, one can
reconstruct the Riemannian structure $(M,g)$ further and further away
from $\Gamma$. In particular, one can reconstruct the structure
outside the domain where the semigeodesic coordinates of $\Gamma$ are
applicable. 

Such an alternating iteration has been used in several uniqueness results for inverse boundary value problems \cite{Isozaki2010, Katchalov2001,Kurylev2015,Lassas2014},
however, the iteration is unstable, and it has not been
implemented computationally to our knowledge. In order to understand
how to regularize the iteration, 
we need to study the inherent instability of the local recovery and redatuming steps. 
The present paper considers the redatuming step,
that is, the problem (P), while we have previously studied the
local recovery step \cite{Hoop2016}.

We divide our redatuming procedure into two steps,
which we call {\em moving receivers} and {\em sources}, respectively.
The moving receivers step 
concerns solving the following time-windowed problem:
\begin{namedProblem}{WP}
  Given $\Lambda_\Gamma^{2T} f|_{(T-r,T+r) \times
    \Gamma}$ for $f \in L^2([0,T-r]\times\Gamma)$, determine
  $u^f(T,\cdot)$ in $\Omega$. Here, $g$ is known in
  $M(\Gamma,r)$.
\end{namedProblem}
\begin{figure}[!htb]
  \centering
  \includegraphics[width=1.4in]{./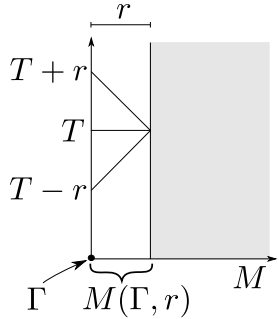}
  \caption{
    \label{img:WindowingCartoon}
    Geometry of the windowed problem (WP). The shaded region indicates
    where the metric is unknown.  
  }
\end{figure}
Time-windowing arises naturally in the redatuming problem, and it also allows us to consider the problem (WP) as a unique continuation problem for the wave equation on $(T-r,T+r) \times M(\Gamma, r)$.
We illustrate the geometry of (WP) in Figure
\ref{img:WindowingCartoon}.
Let us note that, as $f$ is assumed to be supported on $[0,T-r] \times \overline{\Gamma}$, $u^f$ satisfies the homogeneous Neumann boundary condition on $(T-r,T+r) \times \partial M$. In our computational procedure, we will allow $f$ to have support in $[T - r, T ] \times \Gamma$. This does not affect the stability properties of the moving receivers step, since if $f \in L^2([T - r, T ] \times \Gamma)$, then solving (\ref{eqn:1stForwardProb}) in $M$ to obtain $u^f(T,\cdot)$ is a classical well-posed problem, when $g$ is known on $M(\Gamma, r)$.

We will show that, after a transposition, the moving sources step reduces to a problem analogous to (WP). For this reason, we develop stability theory only for the moving receivers step. 

The problem (WP) is a special case of the following unique continuation problem
\begin{namedProblem}{UC}
  Given Cauchy data $(u, \p_\nu u)$ on $(T-r,T+r)
  \times \Gamma$, determine $u(T,\cdot)$ near $\Gamma$. Here, $u$ satisfies $\p_t^2 u - \Delta_g u = 0$, and $g$ is known in
  $M(\Gamma,r)$.
\end{namedProblem}
Thus, the stability of
(WP) can be no less favorable than that of (UC). On the other
hand, since problem (WP) considers waves that satisfy a global Neumann
boundary condition, while no such boundary conditions are imposed in
(UC), it is not immediately evident how the stability of (WP) compares
to that of (UC). 
Nonetheless, we will show that (WP) enjoys the same stability as
(UC), and we present sharp stability theory for the problem (WP) in
Section \ref{sec:Stability}.

Let us briefly summarize the stability theory. Under suitable
conditions, the problem (UC) is known to be conditionally H\"{o}lder
stable, see e.g.  \cite[Thm. 3.2.2]{Isakov2010}. We give a geometric
reformulation of this result in terms of convexity of
$\Gamma$, and show that conditional H\"{o}lder stability is optimal
for (UC). Our counterexample establishing the optimality of H\"{o}lder
type stability works in the case of strictly convex $\Gamma$, and moreover, we
show that a refined version of this counterexample also works in the
case of the windowed problem (WP). In particular, this shows that the
global homogeneous Neumann boundary condition on $(T-r,T+r) \times \p M$ in (WP)
does not improve the stability. This should be contrasted with
\cite{Bardos1996}, where unconditional Lipschitz stability is obtained
for a problem of the form (WP),
with strictly convex $\Gamma$, under the additional assumption
that $u^f(T,\cdot)$ and $\p_t u^f(T,\cdot)$ are supported near $\Gamma$.

Unique continuation problems have been studied from computational point of view, for example, the so-called quasireversibility method has been used to solve (UC) in \cite{Klibanov1992}. 
In this paper we propose to use the iterative time-reversal control method due to Bingham et. al. \cite{Bingham2008} to solve (WP). 
In \cite{Bingham2008} this method was applied to the coefficient determination problem to find $g$ given $\Lambda_\Gamma^{2T}$, however, as explained in Section \ref{sec:MovingData}, it can be used to solve (WP) as well.
We describe also the moving sources step in Section \ref{sec:MovingData} 
and give there a complete algorithm solving (P). 
Finally, we give computational examples in Section \ref{sec_comp}.
To our knowledge, this is the first computational implementation of the iterative time-reversal control method.

\section{Stability Theory for the Windowed Problem}

\label{sec:Stability}

In this section, we consider the stability theory for the windowed
problem (WP). We begin by recalling the stability theory for the more
general problem (UC). We were not able to find all the results in
Sections 2.1-3 in the literature, however, the techniques used there
are well-known.

\subsection{Conditional H\"older stability for UC under convexity conditions}


We use ideas from \cite{Isakov2010, Stefanov2013} to prove the
following conditional H\"older stability estimate:

\begin{lemma}
\label{lemma:HolderTypeStability}
Let $T > 0$, $x_0 \in \Gamma$, and suppose that $\Gamma \subset \p M$ is strictly convex
in the sense of the second fundamental form. 
Then there exist a neighbourhood 
$U$ of $(0,x_0)$ in $(-T,T) \times M$,
$\kappa \in (0,1)$ and $C > 0$
such that for all $u \in H^2((-T,T) \times M)$
satisfying $\p_t^2 u - \Delta_g u = 0$
it holds that 
\begin{equation}
\label{eqn:HolderTypeStability}
\|u\|_{H^1\left(U\right)} \leq C(F + A^{1 - \kappa} F^{\kappa}),
\end{equation}
where 
$F =\|u\|_{H^{3/2}((-T,T) \times \Gamma)} + \|\p_\nu u\|_{H^{1/2}((-T,T) \times \Gamma)}$ and 
$A = \|u\|_{H^1((-T,T) \times M)}$.
\end{lemma}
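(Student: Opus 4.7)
The plan is to deduce (\ref{eqn:HolderTypeStability}) from a H\"ormander-type Carleman estimate for the hyperbolic operator $P = \p_t^2 - \LaplaceBeltrami$, combined with a smooth cutoff and an optimization in the Carleman parameter. The central object to construct is a smooth weight function $\varphi(t,x)$ defined near $(0,x_0)$ that is strongly pseudoconvex with respect to $P$ and whose level structure is adapted to the geometry: $\varphi$ is largest on a target neighborhood $U$ of $(0,x_0)$, takes an intermediate value on $\Gamma$ (where the Cauchy data is encoded in $F$), and is smallest on the part of $\p V$ not lying in $\Gamma$, with $U \Subset V$ a slightly larger neighborhood containing the support of the cutoff.

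To build $\varphi$, I would pass to boundary normal coordinates $(t,y,s)$ near $(0,x_0)$ with $s = \dist(\cdot,\Gamma)$ and $\Gamma = \{s = 0\}$ locally, and try a weight of the schematic form
\[
\varphi(t,y,s) = s + c_1 s^2 - c_2 |y - y_0|^2 - c_3 t^2.
\]
Writing the principal symbol of $P$ as $p = \tau^2 - |\xi|_g^2$, H\"ormander's pseudoconvexity condition $\{p,\{p,\varphi\}\} > 0$ on $\{p = \{p,\varphi\} = 0\} \setminus 0$ reduces to a positivity requirement whose dominant contributions come from $\p_s^2 \varphi$ and from the second fundamental form of $\Gamma$; strict convexity of $\Gamma$ is exactly what allows this condition to be met with $c_1, c_2 > 0$ fixed and $c_3 > 0$ chosen small enough to respect the hyperbolicity of $P$.

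Having fixed $\varphi$, I pick levels $\delta_1 < \delta_2 < \delta_3$ so that $\{\varphi \geq \delta_3\} \cap V \supset U$, $V \cap \Gamma \subset \{\varphi \leq \delta_2\}$, and $(\p V \setminus \Gamma) \cap V \subset \{\varphi \leq \delta_1\}$. Choose $\chi \in C^\infty(M)$ with $\supp \chi \subset \overline{V}$ and $\chi \equiv 1$ on $U$, engineered so that $\supp \nabla \chi$ lies in $\{\varphi \leq \delta_1\}$ away from $\Gamma$. Apply the boundary Carleman estimate
\[
\tau \int_V e^{2\tau\varphi}(|\nabla v|^2 + \tau^2 v^2) \leq C \int_V e^{2\tau\varphi} |Pv|^2 + (\text{boundary contribution on } \Gamma)
\]
to $v = \chi u$. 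Since $Pu = 0$, we have $Pv = [P,\chi]u$, a first-order expression in $u$ supported in $\supp \nabla \chi$. Bounding the left side below by the contribution from $U$ where $\chi \equiv 1$ and $\varphi \geq \delta_3$, the interior source term above by $e^{2\tau \delta_1} A^2$, and the boundary contribution by $e^{2\tau \delta_2} F^2$, I obtain schematically
\[
e^{2\tau \delta_3} \|u\|_{H^1(U)}^2 \leq C e^{2\tau \delta_1} A^2 + C e^{2\tau \delta_2} F^2.
\]
Optimizing in $\tau$ in the regime $F \leq A$ yields $\|u\|_{H^1(U)} \leq C A^{1-\kappa} F^\kappa$ with $\kappa = (\delta_3 - \delta_1)/(\delta_2 - \delta_1) \in (0,1)$; the additive $F$ in (\ref{eqn:HolderTypeStability}) absorbs the complementary case $F > A$ via the trivial bound $\|u\|_{H^1(U)} \leq A \leq F$.

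The main obstacle is the construction of the weight: hyperbolicity forces $\varphi$ to vary slowly in $t$, so arranging simultaneously the pseudoconvexity and the required level-set geometry---$U$ in a superlevel set, $\Gamma$ in a strictly intermediate sublevel set, and the remaining portion of $\p V$ in a lower sublevel set---is possible only when $\Gamma$ is strictly convex at $x_0$. A secondary technicality is that the standard Carleman estimate controls boundary Cauchy data in $L^2$-type norms, whereas $F$ in the statement is given in the $H^{3/2} \times H^{1/2}$ norm; bridging this mismatch typically proceeds through a higher regularity version of the estimate applied after differentiating $u$ tangentially to $\Gamma$, or through trace and interpolation estimates.
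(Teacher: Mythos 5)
Your overall strategy---a Carleman estimate with a pseudoconvex weight whose positivity at the boundary is supplied by the strict convexity of $\Gamma$, a cutoff, and optimization in $\tau$---is the same family of argument the paper uses. But the level-set geometry you prescribe for the weight is backwards, and this is not a cosmetic issue: it breaks the argument. You ask for $\delta_1<\delta_2<\delta_3$ with $U\subset\{\varphi\ge\delta_3\}$ and $V\cap\Gamma\subset\{\varphi\le\delta_2\}$. Since $U$ is a neighbourhood of $(0,x_0)$ with $x_0\in\Gamma$, points of $U$ accumulate at $\Gamma$, so these two inclusions are mutually inconsistent when $\delta_2<\delta_3$. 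Moreover, even granting the schematic inequality $e^{2\tau\delta_3}\|u\|^2_{H^1(U)}\le Ce^{2\tau\delta_1}A^2+Ce^{2\tau\delta_2}F^2$, dividing by $e^{2\tau\delta_3}$ and letting $\tau\to\infty$ would force $\|u\|_{H^1(U)}=0$ for every solution, which is absurd; equivalently, your own exponent $\kappa=(\delta_3-\delta_1)/(\delta_2-\delta_1)$ exceeds $1$ under your stated ordering, so it cannot be the H\"older exponent. For a conditional estimate $\|u\|\le CA^{1-\kappa}F^\kappa$ the data term must carry a weight that does \emph{not} decay relative to the left-hand side as $\tau\to\infty$: the weight must be maximal on $\Gamma$ and decrease into the interior, with $U$ a superlevel set hugging $\Gamma$ and the cutoff commutator living strictly below it. Concretely, your ansatz $\varphi=s+c_1s^2-\cdots$ increases in the normal variable $s$ and must be replaced by something decreasing in $s$; the paper uses $\psi=(R-s)^2-\delta t^2-\mu|y|^2-R^2$ (then $\phi=e^{\beta\psi}-1$), with $R\,\SecondFundamentalForm(\eta,\eta)\ge|\eta|^2$ for large $R$ delivering the pseudoconvexity on $\{s=0\}$.

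A second difference worth noting: rather than invoking a Carleman estimate with boundary terms, the paper uses the right inverse of the trace map to produce $w\in H^2(Q^+)$ with the same Cauchy data as $u$ on $(-T,T)\times\Gamma'$ and $\|w\|_{H^2}\le CF$; then $v=u-w$ has zero Cauchy data, extends by zero across $\Gamma'$ into an enlarged coordinate cylinder $Q$, and the \emph{interior} Carleman estimate for $H^2_0(Q)$ functions applies to $\chi v$ with source $Pv=-Pw\in L^2$, $\|Pv\|_{L^2}\le CF$. This simultaneously disposes of the norm mismatch you flag at the end ($H^{3/2}\times H^{1/2}$ is exactly what the trace right-inverse needs to give an $H^2$ extension), with no need for higher-regularity Carleman estimates or tangential differentiation.
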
  
\begin{proof}
Let $\Gamma' \subset \Gamma$ be a coordinate neighbourhood of $x_0$,
let $s_0 > 0$ be small, and set $\Omega = (-s_0,s_0) \times \Gamma'$.
We will use semigeodesic coordinates $(s,y) \in \Omega$ associated to
$\Gamma'$.  Here a point $x \in M$ near $x_0$ has the coordinates
$(s,y)$ where $y$ is the closest point to $x$ in $\Gamma'$ and $s =
d(x, y)$.  Furthermore, we choose the coordinates so that $x_0 =
(0,0)$, and extend $g$ smoothly to $\Omega$. All norms, inner
products, gradients and Hessians will be taken with
respect to the Riemannian structure associated with $g$ on $\Omega$.

Let $Q = (-T_0,T_0) \times \Omega$ for some $T_0 \in (0,T]$. We recall
  that if a function $\phi$ is strongly pseudo-convex in
  $\overline{Q}$ with respect to the wave operator $P := \p_t^2
  -\Delta_g$, then, for $v \in C_0^\infty(Q)$ one has the Carleman
  estimate \cite[Thm. 28.2.3]{Hormander1985a}:
\begin{equation}
  \label{eqn:CarlemanEstimate}
  \tau \int_{Q} e^{2 \tau \phi} (|\p_t v|^2 + |\nabla v|^2 + \tau^2 |v|^2) dt dx
  \le C \int_{Q} e^{2 \tau \phi} |Pv|^2 dt dx, \quad \tau > 1.
\end{equation}
By approximation, this estimate also extends to $v \in H_0^2(Q)$ .

To obtain a function $\phi$ that is strongly pseudo-convex in
$\overline{Q}$ , we follow the approach from \cite{Stefanov2013}.
Specifically, we construct a function $\psi$ satisfying:
\begin{enumerate}[label=(\roman*), topsep=4pt]
  \item \label{itm:NotLightLike} $|\p_t \psi| \neq |\nabla \psi|$ in $Q$,
  \item \label{itm:HamFlow} $H_p^2 \psi > 0$ on
    $T^*(\overline{\Omega})\setminus 0$ whenever $\psi = p = H_p \psi
    = 0$,
\end{enumerate}
where, $H_p$ denotes the Hamiltonian flow associated with principal
symbol $p$ of $P$. If $\psi$ satisfies
\ref{itm:NotLightLike}-\ref{itm:HamFlow}, then the function $\phi :=
\exp(\beta \psi) - 1$ will be strongly pseudo-convex in
$\overline{Q}$, provided that $\beta \gg 1$. Moreover, when $\psi(t,x)
= \theta(t) + \rho(x)$, condition \ref{itm:HamFlow} is equivalent to
\begin{align}
\label{pseudo_convexity}
\p_t^2 \theta + D^2 \rho(\xi,\xi) > 0, \quad 
\xi \in S_x M,\ (t,x) \in Q,
\end{align}
holding whenever $\p_t \theta \pm (\xi, \nabla \rho) = 0$, see e.g.
\cite{Stefanov2013}. Here, we use $S_x M$ to denote the unit sphere at $x$.

In order to derive (\ref{eqn:HolderTypeStability}) from
(\ref{eqn:CarlemanEstimate}) via a cut-off argument, the function
$\psi$ needs to be chosen so that it decays when the distance to the
origin $(t,s,y) = (0,0,0)$ grows in the region $s > 0$.
Let $R, \delta, \mu > 0$, and 
consider the polynomial
\begin{equation*}
  \psi(t,s,y) = (R-s)^2 - \delta t^2 - \mu |y|^2 - R^2.
\end{equation*}
Here, we identify $y$ with its coordinate representation and use $|y|$
to denote the Euclidean length of the coordinate vector for $y$. The
function $\psi$ decays as needed when $0 < s < R$. Let us show that
$R$, $\delta$ and $\mu$ can be chosen so that $\psi$ satisfies
(\ref{pseudo_convexity}).  Consider first the case $\mu = 0$. Then, on
the boundary, $s=0$ and this inequality reduces to
\begin{equation*}
  -\delta + \sigma^2 + R \SecondFundamentalForm(\eta, \eta) > 0 ,
  \quad \xi = (\sigma,\eta),
\end{equation*} 
where $\SecondFundamentalForm$ denotes the second fundamental form for
$\Gamma$.  By strict convexity, it holds that $R
\SecondFundamentalForm(\eta, \eta) \ge |\eta|^2$ for large enough $R >
0$.  Moreover, $\sigma^2 + |\eta|^2 = |\xi|^2 = 1$, and therefore
(\ref{pseudo_convexity}) holds if $\delta < 1$.  The inequality
(\ref{pseudo_convexity}) remains valid in a neighbourhood of the
origin for small $\mu > 0$ by smoothness. To show that
\ref{itm:NotLightLike} holds, we note that $\p_t \psi(0,0) = 0$ and
$|\nabla \psi(0,0)| \geq 2R$, thus $|\p_t \psi| \neq |\nabla \psi|$ at
the origin. Smoothness of $\psi$ implies that this condition also
holds in a neighborhood of the origin. Then, we can shrink $Q$ by
decreasing $s_0$, $T_0$, and $\Gamma'$, in order to ensure that $\psi$
satisfies \ref{itm:NotLightLike} and \ref{itm:HamFlow} on $Q$.

\begin{figure}[!htb]
  \centering
  \includegraphics[width=2.75in]{./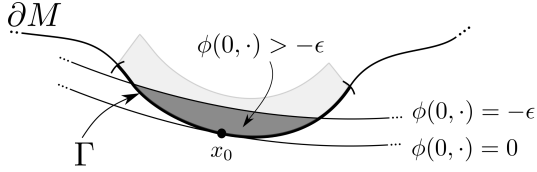}
  \caption{
    \label{img:HolderCartoon}
    A cartoon illustrating the geometry for Lemma
    \ref{lemma:HolderTypeStability} at $t = 0$. The lightly shaded
    region represents $Q^+ \cap \{t = 0\}$, while the dark region
    depicts $U(\epsilon) \cap \{t = 0\}$. For this simple case we have
    taken $\Gamma' = \Gamma$.}
\end{figure}

We write $Q^+ = Q \cap \{ s > 0\}$, and use the right inverse of the
trace map to get $w \in H^2(Q^+)$ with Cauchy data $(u, \p_\nu u)$ on
$(-T,T) \times \Gamma'$ satisfying $\|w\|_{H^2(Q^+)} \leq C F$. Then,
$v = u-w$ has zero Cauchy data on $(-T,T) \times \Gamma'$ and we
extend $v$ by zero as a function on $Q$. Then, $f = Pv = -Pw$ satisfies
$\norm{f}_{L^2(Q^+)} \le C F$. We note that $\psi < 0$ in $Q^+$ so $\phi
< 0$ there too. Choose $\epsilon > 0$ sufficiently small so that the
set
\begin{equation*}
  U(\epsilon) = \{(t,s,y) \in \R^{1+n};\ \phi(t,s,y) \ge -\epsilon,\ s > 0 \}
\end{equation*}
satisfies $U(\epsilon) \subset Q^+ $, and choose $\chi \in
C_0^\infty(Q)$ such that $\chi = 1$ in $U(\epsilon)$.  See Figure
\ref{img:HolderCartoon} for an illustration.

We will apply (\ref{eqn:CarlemanEstimate}) to $\chi v$. Note that 
\begin{equation*}
  P(\chi v) = \chi Pv + [P,\chi]v = \chi f + [P,\chi]v,
\end{equation*}
where the commutator $[P,\chi]$ is a first-order differential operator
that vanishes on the set $U(\epsilon)$.
Thus
\begin{align*}
  &\tau \int_{U(\epsilon/2)} e^{2 \tau \phi} \left(|\p_t v|^2 + |\nabla v|^2 + \tau^2 |v|^2\right) dt dx \\ 
  &\quad \le C \left(\int_{Q^+} e^{2 \tau \phi} |f|^2 dt dx 
  + \int_{Q^+ \setminus U(\epsilon)} e^{2 \tau\phi} |[P,\chi]v|^2 dt dx\right).
\end{align*}
Using that $\tau > 1$, and setting $p = 2\norm{\phi}_{L^\infty(Q)} +
\epsilon$, it holds that
\begin{equation*}
  \norm{v}_{H^1(U(\epsilon / 2))}^2 \le C \left(e^{\tau p}\norm{f}_{L^2(Q^+)}^2 + e^{-\tau \epsilon}\norm{v}_{H^1(Q^+)}^2\right).
\end{equation*}
Since $v = u + w$ and $\|w\|_{H^2(Q^+)} \leq CF$ we have that
$\|v\|_{H^1(Q^+)} \leq C(A + F)$. Recalling that $\|f\|_{L^2(Q^+)}
\leq CF$, we find:
\begin{equation*}
  \norm{v}_{H^1(U(\epsilon / 2))}^2 \le C \left(e^{\tau p}F^2 +
  e^{-\tau \epsilon}(A+F)^2\right).
\end{equation*}
Choosing $\tau$ as in
\cite[Thm. 3.2.2]{Isakov2010}, we obtain
\begin{equation*}
  \norm{v}_{H^1(U(\epsilon / 2))} \le C F^{\kappa} (A+F)^{1-\kappa},
\end{equation*}
where $\kappa = \epsilon / ( p + \epsilon)$. Then, since $0 < 1 -
\kappa < 1$, we see that $(A+F)^{1-\kappa} \leq A^{1-\kappa} +
F^{1-\kappa}$. Finally, we again use that $v = u + w$ and the bound on
$\|w\|_{H^2(Q^+)}$ to conclude:
\begin{equation*}
  \norm{u}_{H^1(U(\epsilon/2))} \le C(F +  A^{1-\kappa} F^{\kappa}).
\end{equation*}
\end{proof}

\subsection{Convexity is necessary for H\"older stability}


\begin{figure}[!htb]
  \label{img:GaussianBeamCartoon}
  \centering
  \includegraphics[width=1.75in]{./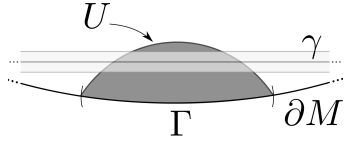}
  \caption{Example where a geodesic $\gamma$ passes through $U$ but
    fails to intersect $\Gamma$. The dark shaded region is a slice of
    $U$ at some time $t$, and the light shaded region is the spatial
    projection of the effective support of a Gaussian beam centered on
    $\gamma$.}
\end{figure}

In this section, we demonstrate that a convexity condition must hold
between the sets $\Gamma$ and $U$ in order for a H\"{o}lder stability
estimate of the type (\ref{eqn:HolderTypeStability}) to
hold. 
We follow ideas from \cite{Ralston1982}, and show  
that if there is a bicharacteristic ray
that passes over $U$ but does not meet $[-T,T]\times \overline{\Gamma}$,
then (\ref{eqn:HolderTypeStability}) can not hold. 

Let $\gamma$ be a unit
speed geodesic on $M$ and consider the corresponding bicharacteristic ray $\beta(t) = (t,\gamma(t))$. We suppose that there exists $t_0 \in
(-T,T)$ for which $(t_0,\gamma(t_0)) \in U$ but $\gamma(t) \not \in
\overline{\Gamma}$ for all $t \in [-T,T]$.
Let us consider a Gaussian beam $u_\epsilon$ concentrated on $\gamma$.
We refer to
\cite{Katchalov2001,Ralston1982} for the construction of Gaussian beams,
and recall here only that, for $\epsilon > 0$, $u_\epsilon$ is a family of solutions to the wave equation $\p_t^2 u_\epsilon -  \Delta_g u_\epsilon = 0$ on $(-T,T) \times M$ satisfying 
for any $j \in
\mathbb{N}_0$ and multi-index $\lambda \in \mathbb{N}_0^n$
\begin{equation*}
  |\p_t^j \p_x^\lambda (u_\epsilon(t,x) - \chi(t,x) U_\epsilon^N(t,x))| \leq C_{j,\lambda,N} \epsilon^{N - (j + |\lambda| + n/4)},
  \quad t \in (-T,T),\ x \in M.
\end{equation*}
Here, $\chi$ is a smooth function having small support around $\beta$ and satisfying $\chi = 1$ near $\beta$,
and in local coordinates $(t,z)$, $U_\epsilon^N$ is a smooth function of the form
$$
U_\epsilon^N(t,z) = \epsilon^{-n/4} \exp\left(i\epsilon^{-1} \Theta(t,z)\right) \sum_{j=0}^N \epsilon^j a_j(t,z), \quad N \in \N.
$$
Here, $\Theta$ is a complex valued function whose imaginary part vanishes on $\beta$, and satisfies $\Im \Theta(t,z) \ge \theta(t) |z-\gamma(t)|^2$ for some continuous strictly positive function $\theta$. Moreover, the function $a_0$ does not vanish on $\beta$.

First we discuss how the right hand side of
(\ref{eqn:HolderTypeStability}) behaves with respect to the family
$u_\epsilon$.
We define, for an integer $r > 0 $ and each $\epsilon
>0$, the quantities, 
$$
A_\epsilon := \|u_\epsilon\|_{H^r([-T,T]\times
  M)}, \quad F_\epsilon := \|u_\epsilon\|_{H^r([-T,T]\times\Gamma)} +
\|\p_\nu u_\epsilon\|_{H^{r-1}([-T,T]\times\Gamma)},
$$ 
and investigate
how they behave as $\epsilon \rightarrow 0$. 
Since $\beta$ does not intersect 
$[-T,T] \times \overline{\Gamma}$, we can
choose $\chi$ so that it vanishes on $[-T,T]\times \Gamma$. Then,
\begin{equation*}
  |\p_t^j \p_x^\lambda u_\epsilon(t,x)| \leq C_{j,\lambda,N} \epsilon^{N - (j + |\lambda| + n/4)}, \quad \text{for $(t,x) \in [-T,T]\times\Gamma$}.
\end{equation*}
Consequently, for any $r \in \mathbb{N}$, it holds that $F_\epsilon \leq C_{r,N} \epsilon^{N - (r + n/4)}$.
Now, let us consider the quantity $A_\epsilon$. 
We have $\norm{U_\epsilon^N}_{H^r([-T,T] \times M)} \le C_{r,N} \epsilon^{-r-n/4}$, and therefore also
$A_\epsilon \leq C_{r,N} \epsilon^{-r-n/4}$.
Thus, for any fixed $0 < \kappa < 1$, there exists a constant $C_{r,N} > 0$
such that
\begin{equation*}
  F_\epsilon + A_\epsilon^{1 - \kappa} F_\epsilon^{\kappa} \leq C_{r,N}
  \left(\epsilon^{N - (r + n/4)} + \epsilon^{\kappa N - (r + n/4)}\right).
\end{equation*}
Finally, we choose $N$ sufficiently large such that $\kappa N > r + n/4$,
and conclude that
\begin{equation}
\label{cex_rhs_vanishes}
  F_\epsilon + A_\epsilon^{1-\kappa} F_\epsilon^{\kappa} \rightarrow 0 \quad\text{as $\epsilon \rightarrow 0$.}
\end{equation}

We now consider how the left-hand side of
(\ref{eqn:HolderTypeStability}) behaves with respect to the family
$u_\epsilon$. 
In view of (\ref{cex_rhs_vanishes}),
it remains to show that $\|u_\epsilon\|_{H^r(U)}$ stays positive as $\epsilon \rightarrow 0$. Since $\|u_\epsilon\|_{H^r(U)} \geq \|u_\epsilon \|_{L^2(U)}$, we can consider only the $L^2$ norm. Let $B$ be a ball
containing the point $\gamma(t_0)$
and satisfying $\{t_0\} \times B \subset U$. On \cite[p. 176]{Katchalov2001}, it
is shown that $\lim_{\epsilon \rightarrow 0}
\|u_\epsilon(t,\cdot)\|_{L^2(B)} = a(t)$, where $a$ is
a continuous strictly positive function. 
Thus for small $\delta > 0$ it holds that $\lim_{\epsilon \rightarrow 0}
\|u_\epsilon \|_{L^2([-\delta, \delta] \times B)} > 0$.
This concludes the proof, showing that if there is a bicharacteristic ray
passing over $U$ that does not meet $[-T,T]\times \overline{\Gamma}$,
then (\ref{eqn:HolderTypeStability}) cannot hold.

\subsection{A counterexample to Lipschitz stability for UC}
\label{subsec:counterExForUC}

In this section, we give a counterexample showing that
(\ref{eqn:HolderTypeStability}) cannot hold with $\kappa = 1$.  This
example is a variation of the classical counterexample by Hadamard
\cite[p. 33]{Hadamard1953}, adapted to a strictly convex setting.

Let us consider a case where $M$ is contained in the half disk
$$
\{ r e^{i \theta} \in \C;\ r \in (0,1],\ |\theta|< \pi/2\}.
$$
We assume that $M$ is equipped with the Euclidean metric and suppose
that $\Gamma \subset \p M$ is of the form $\Gamma = \{ e^{i \theta}
\in \C;\ |\theta|< \theta_0\}$, for some $\theta_0 \in (0, \pi/2)$.

We consider a family of stationary waves in $M$. For $n \in \N$, we define
\begin{equation*}
   \phi_n(r e^{i \theta}) := r^{-n} e^{i n \theta}.
\end{equation*}
Then, we recall that in polar coordinates $(r,\theta) \mapsto r e^{i \theta}$
the Laplacian has the form
\begin{equation*}
  \Delta = \p_r^2 + r^{-1} \p_r + r^{-2} \p_\theta^2.
\end{equation*}
Using this formula, it is straightforward to check that the $\phi_n$
are harmonic in $M$ (note that $0 \not\in M$). Letting $u_n(t,x) =
\phi_n(x)$, it is immediate that $(\p_t^2 - \Delta) u_n = 0$ on $\R \times M$.

Next, we observe that $\phi_n(1,\theta) = e^{in\theta}$ and $\p_\nu \phi_n(1,
\theta) = i n e^{in\theta}$. Thus,
\begin{equation*}
  \|(\phi_n,\p_\nu \phi_n)\|_{H^k(\Gamma)\times H^{k-1}(\Gamma)} \leq  \|\phi_n\|_{H^k(\Gamma)} +  n \|\phi_n\|_{H^{k-1}(\Gamma)} \sim n^k.
\end{equation*}
Then, we let $\epsilon, \theta_1, s > 0$ be small and define the sets $\Omega =
(1-\epsilon,1) \times (-\theta_1,\theta_1)$ and $U = (T-s,T+s) \times
\Omega$. We note that, if $\theta_1 \le \theta_0$ and $\epsilon$ is
sufficiently small, then $\Omega \subset M$. Letting $q =
1-\epsilon$, we observe that
$$
\norm{\phi_n}_{L^2(\Omega)}^2 = 
\int_{q}^1 r^{-2n}  rdr \int_{-\theta_1}^{\theta_1} |e^{i n \theta}|^2 d\theta \sim \frac{q^{-2(n-1)}}{n-1}
$$ for large $n > 0$.  Thus, a Lipschitz stability estimate of
the form
\begin{align*}
  &\norm{u}_{H^k(U)} \le C \norm{(u,\p_\nu u)}_{H^k \times
    H^{k-1}((T-s,T+s) \times \Gamma)}
\end{align*}
leads to a contradiction when we take $u = u_n$. To see this, we first note that the left-hand side is bounded below by
$C q^{-(n-1)}/\sqrt{n-1}$, where $C$ is independent of $n$. This holds
because $\|u_n\|_{L^2(U)} = 2s\|\phi_n\|_{L^2(\Omega)}$. On the other
hand, the right-hand side of this inequality is comparable to $n^{k}$.
Thus, we get the contradiction that
\begin{equation*}
  q^{-(n-1)} \lesssim n^k\sqrt{n-1}
\end{equation*}
for large $n$.

\subsection{A counterexample to Lipschitz stability for WP}

In this section, we construct a counterexample to Lipschitz type
stability for the problem (WP) in the strictly convex boundary
setting. Our construction is based on finding a family of Neumann
sources $\{f_n\}$ producing a family of waves $\{u^{f_n}\}$ solving
(\ref{eqn:1stForwardProb}) that exhibit similar stability properties
to the waves considered in Section \ref{subsec:counterExForUC}. The
waves $u^{f_n}$ will then satisfy the hypotheses of (WP), and show
that Lipschitz type stability does not hold for (WP).  We carry out
our construction in two steps.  First, for some $\epsilon > 0$, we
find waves $u_n$ with vanishing Neumann traces that behave like the
waves from Section \ref{subsec:counterExForUC} on $t \in
[T-\epsilon,T+\epsilon]$.  Then, we use exact controllability to
obtain Neumann sources $f_n \in L^2([0,T-\epsilon]\times\Gamma)$ that
reproduce these waves, in the sense that $u^{f_n}(t,\cdot) =
u_n(t,\cdot)$ for $t \geq T-\epsilon$.

We consider the case where $M$ is the unit disk equipped with the
Euclidean metric, and $\Gamma = (-\theta_0, \theta_0)$ for some
$\theta_0 \in (\pi/2,\pi)$ in polar coordinates. Let $0 < \epsilon <
T$ and $\Omega \subset M$ a neighborhood of $M(\Gamma,\epsilon)$, and
select $\epsilon$ and $\Omega$ sufficiently small that $(0,0) \not \in
\overline{\Omega}$. We will make use of a fixed cut-off function $\chi \in
C^\infty([0,2T]\times M)$ which we choose to have the form $\chi(t,x)
= \chi_t(t) \chi_x(x)$ with $\chi_t \in C^\infty([0,2T])$ and $\chi_x
\in C^\infty(M)$. In particular, we choose $\chi_t$ so that it satisfies
$\chi_t = 1$ on a neighborhood of $[T-\epsilon,2T]$ and $\chi_t = 0$
on a neighborhood of $[0,T-2\epsilon]$. Also, we choose $\chi_x$
to satisfy $\chi_x \equiv 1$ on $M(\Gamma,\epsilon)$ and $\chi_x
\equiv 0$ on $M \setminus \Omega$.

Let $\phi$ be any harmonic function in $\Omega$. Using $\phi$, we
define $w$ to be the solution to:
\begin{align*}
  (\partial_t^2  - \Delta) w(t,x) &= 0, & (t,x) \in (0, 2T) \times M,\\ 
  \boundaryConds w(t,x) &= \boundaryConds(\chi(t,x) \phi(x)), & (t,x) \in(0,2T) \times \p M, \\
  w(T,x) = \chi_x \phi,\quad\partial_t w(T,x) &= 0,  & x \in M,
\end{align*}
and, let $v$ solve
\begin{align*}
  (\partial_t^2 - \Delta) v(t,x) &= 0, & (t,x) \in (0, 2T) \times M,\\ 
  \boundaryConds v(t,x) &= \boundaryConds(\chi(t,x) \phi(x)), & (t,x) \in(0, 2T) \times \p M, \\
    v(T-2\epsilon,x) = \partial_t v(T-2\epsilon,x) &= 0, & x \in M.
\end{align*}
We define $u = w - v$ and study the properties of $u$ in terms of $w,
v$ and $\phi$. Let us observe that $\p_\nu u =
0 $ on $(0,2T)\times \p M$, since $\p_\nu w$ and $\p_\nu v$ coincide
there.

To begin our analysis of $u$, we show that $w(t,x) = \phi(x)$ for
$(t,x) \in K$, where
\begin{equation*}
  K = \left\{(t,x) \in [T-\epsilon,T+\epsilon] \times M(\Gamma,\epsilon)~:~d(x, M \setminus M(\Gamma,\epsilon)) > \left|t - T\right|\right\}.
\end{equation*}
In order to show that $w = \phi$ on $K$, let us abuse notation and
identify $\phi$ with its constant extension in time. Then, we note
that $\phi$ is harmonic in $\Omega$ and constant in time, thus,
$(\p_t^2 - \Delta)\phi(t,x) = (\p_t^2 - \Delta)w(t,x) = 0$ on
$(T-\epsilon,T+\epsilon) \times \Omega$.  Next, we note that $w(T,
\cdot) = \phi$ and $\p_tw(T,\cdot) = \p_t \phi = 0$ in
$M(\Gamma,\epsilon)$. Finally, we observe that $\p_\nu w = \p_\nu
\phi$ on $[T-\epsilon, T+\epsilon] \times (\p M \cap
M(\Gamma,\epsilon))$, since $\chi = 1$ there. Thus, finite speed of
propagation for (\ref{eqn:1stForwardProb}) implies that $w$ and $\phi$
coincide in $K$.

We define $\Sigma = [T-\epsilon,T+\epsilon] \times \Gamma$ and $\Sigma' =
[T-2\epsilon,T+2\epsilon] \times \p M$. Then, for a set $U \subset
[T-\epsilon,T+\epsilon] \times M$, we investigate how the size of $u$
on $U$ compares to the size of $(u,\p_\nu u)$ on $\Sigma$. To that
end, we note that $\p_\nu u = 0$ on $\Sigma$ and observe that
\begin{equation*}
  \|(u,\p_\nu u)\|_{H^k(\Sigma)\times H^{k-1}(\Sigma)} =
  \|u\|_{H^k(\Sigma)} \leq \|w\|_{H^k(\Sigma)} + \|v\|_{H^k(\Sigma)}.
\end{equation*}
We will bound the norms on the right in terms of norms of
$\phi$. First, we bound the $H^k$ norm of $v$. Since $\chi_t$ is
identically zero on a neighborhood of $T - 2\epsilon$, $\p_\nu v =
\p_\nu(\chi \phi)$ vanishes identically on a neighborhood of
$\{T-2\epsilon\} \times \p M$. Because $v(T-2\epsilon, \cdot) = \p_t
v(T-2\epsilon,\cdot) = 0$, we see that $v$ satisfies compatibility
conditions to all orders at $t = T-2\epsilon$. Appealing to standard
estimates for the wave equation and trace theorems, we can then show
that
\begin{equation*}
  \|v\|_{H^k(\Sigma')} \lesssim \|\p_\nu v\|_{H^\ell(\Sigma')},
\end{equation*}
where $\ell > k$ (in particular $\ell = 2k+1$ works).  Combining this
with the previous estimate and using that $\p_\nu v = \p_\nu w$ on
$\Sigma'$ yields:
\begin{equation*}
  \|(u,\p_\nu u)\|_{H^k(\Sigma) \times H^{k-1}(\Sigma)} \lesssim
  \|w\|_{H^k(\Sigma)} + \|\p_\nu w\|_{H^\ell(\Sigma')}.
\end{equation*}
Then, since $\p_\nu w = \p_\nu (\chi \phi)$, $w = \phi$ on $K$, and
both $\Sigma \subset K$ and $\Sigma \subset \Sigma'$ we conclude:
\begin{equation*}
  \|(u,\p_\nu u)\|_{H^k(\Sigma) \times H^{k-1}(\Sigma)} \lesssim
  \|\phi\|_{H^k(\Sigma')} + \|\p_\nu \phi\|_{H^\ell(\Sigma')}.
\end{equation*}
Next, we let $q = 1 - \frac{\epsilon}{2}$ and note that $0 < q <
1$. We consider the set
\begin{equation*}
  U := \left\{(t,r,\theta)~:~T-\frac{\epsilon}{2} < t < T+\frac{\epsilon}{2},~~q < r < 1,~~\theta \in \Gamma\right\}.
\end{equation*}
Observe that $U \subset K$, so $w = \phi$ on $U$. Again, using
standard estimates for the wave equation and that $\p_\nu v =
\p_\nu(\chi \phi)$ on $\Sigma'$, we can show that
\begin{equation*}
  \|v\|_{H^k(U)} \lesssim \|\p_\nu v\|_{H^{2k}(\Sigma')} \lesssim
  \|\p_\nu \phi\|_{H^{2k}(\Sigma')}.  
\end{equation*}
Whereas, $w = \phi$ on $U$, so $\|w\|_{H^k(U)} = \|\phi\|_{H^k(U)}$.

Let us now take $\phi = \phi_n$ from the preceding section, and note that
$\phi_n$ is harmonic in $\Omega$. We let $w_n,v_n$ and $u_n$ denote the
waves associated with $\phi_n$ as constructed above. Then, the
estimates given in Section \ref{subsec:counterExForUC} imply that, for
any $j \in \mathbb{N}$, $\|\phi_n\|_{H^j(\Sigma')} \sim n^j$ and
$\|\p_\nu \phi_n\|_{H^j(\Sigma')} \sim n^{j+1}$, while
$\|\phi_n\|_{H^j(U)} \geq \|\phi_n\|_{L^2(U)} \gtrsim
q^{-(n-1)}/\sqrt{n-1}$. So,
\begin{equation*}
  \|u_n\|_{H^k(U)} = \|w_n - v_n\|_{H^k(U)} \gtrsim \left|\|\phi_n\|_{H^k(U)} - \|\p_\nu\phi_n\|_{H^{2k}(\Sigma')} \right|  \gtrsim q^{-(n-1)}/\sqrt{n-1}.
\end{equation*}
On the other hand,
\begin{equation*}
  \|(u_n,\p_\nu u_n) \|_{H^k(\Sigma) \times H^{k-1}(\Sigma)} \lesssim
  \|\phi_n\|_{H^k(\Sigma')} + \|\p_\nu \phi_n\|_{H^\ell(\Sigma')} \lesssim n^k + n^{\ell+1}.
\end{equation*}
Thus, a Lipschitz stability estimate of the form $\|u\|_{H^k(U)} \leq C
\|(u,\p_\nu u)\|_{H^k(\Sigma) \times H^{k-1}(\Sigma)}$ leads to the
contradiction that for all $n$,
\begin{equation*}
  q^{-(n -1)} \lesssim (n^k + n^{\ell+1})\sqrt{n-1}.
\end{equation*}
  
We now show that, if $\tau := T-\epsilon$ is sufficiently large, there
exists a Neumann source $f_n \in L^2([0,T-\epsilon] \times \Gamma)$
for which $u^{f_n} = u_n$ on $[T-\epsilon,T+\epsilon] \times M$. To
see this, we first recall that $M$ is the unit disk equipped with the
Euclidean metric and that $\Gamma$ contains a neighborhood of the
half-circle $\theta \in (-\pi/2,\pi/2)$. This setting is considered on
p. 1030 of \cite{Bardos1992}, where it is noted that if $\tau > 6$,
then any bicharacteristic ray beginning above a point $x \in M$ will
pass over $[0,\tau]\times\Gamma$ in a non-diffractive point. Thus by
choosing $T$ large enough that $\tau = T - \epsilon > 6$, the
hypotheses of \cite[Th. 4.9]{Bardos1992} for exactly controlling $M$
from $[0,\tau] \times \Gamma$ will be satisfied. Specifically, the map
$f \mapsto (u^f(\tau,\cdot), \p_t u^f(\tau,\cdot))$ taking
$L^2([0,\tau] \times \Gamma) \rightarrow H^1(M) \times L^2(M)$ is
surjective (see \cite[ex. 2]{Bardos1992}, p. 1059). It is
straightforward to check that $(u_n(T-\epsilon,\cdot), \p_t
u_n(T-\epsilon, \cdot)) \in H^1(M) \times L^2(M)$, thus there exists a
source $f_n \in L^2([0,T-\epsilon] \times \Gamma)$ for which
$(u^{f_n}(T-\epsilon,\cdot), \p_t u^{f_n}(T-\epsilon,\cdot)) =
(u_n(T-\epsilon,\cdot), \p_t u_n(T-\epsilon, \cdot))$. Finally, we
note that the Cauchy data of $u^{f_n}$ and $u_n$ agree at $t =
T-\epsilon$, and the Neumann traces of both $u^{f_n}$ and $u_n$ vanish
on $[T-\epsilon, T+\epsilon] \times \p M$. Hence, uniqueness for
solutions to (\ref{eqn:1stForwardProb}) implies that
$u^{f_n}|_{[T-\epsilon, T+\epsilon] \times M} = u_n|_{[T-\epsilon,
    T+\epsilon] \times M}$.

To conclude, we have constructed a family of waves $\{u^{f_n}\}$ that
satisfy the hypotheses of (WP). Because these waves coincide with the
waves in the family $\{u_n\}$ on both $U$ and $\Sigma$, we see that a
Lipschitz type stability estimate cannot hold for (WP).
 

\section{Redatuming}
\label{sec:MovingData}

In this section, we present our redatuming procedure, which gives a
constructive solution to (P). We begin in subsection \ref{subsec:BCM}
by briefly reviewing
concepts from the iterative time reversal control method \cite{Bingham2008}.
As discussed in the introduction,
our approach to redatuming is
accomplished in two steps: subsection
\ref{subsec:MovingRec} is devoted to moving receivers, while
subsection \ref{subsec:MovingSrc} is devoted to moving sources.

\subsection{Notation and techniques}
\label{subsec:BCM}
The Riemannian Volume measure on $M$ is denoted by $dV$, and $dS$ will
denote the associated surface measure on $\p M$. When we evaluate $L^2$
inner products, the corresponding integrals will be evaluated with
respect to these measures. 

We define the \emph{control map}, which
is defined for $f \in L^2([0,T] \times \Gamma)$, by,
\begin{equation}
    W^Tf := u^f(T,\cdot).
\end{equation}
We recall that $W^T$ is bounded,
\begin{equation}
  W^T : L^2([0,T] \times \Gamma) \rightarrow L^2(M),
\end{equation}
which follows from \cite{Lasiecka1991}.
Now we form
the \emph{connecting operator},
\begin{equation}
  K^T := (W^T)^*W^T.
\end{equation}
The operator $K^T$ derives its name from the fact that it connects
inner-products between waves in the interior to measurements made on
the boundary. That is, for $f,h \in L^2([0,T] \times \Gamma)$,
\begin{equation}
  \langle u^f(T,\cdot),u^h(T,\cdot) \rangle_{L^2(M)} = \langle W^T f, W^Th \rangle_{L^2(M)} = \langle K^T f,h \rangle_{L^2([0,T] \times \Gamma)}.
\end{equation}
An essential fact about $K^T$ is that it can be obtained by processing
the boundary data, $\Lambda_{\Gamma}^{2T}$. Specifically, one can
construct $K^T$ via the Blagove{\v{s}}{\v{c}}enski{\u\i} identity, which
we use in a form analogous to the expression found in
\cite{Oksanen2013},
\begin{equation}
  \label{eqn:StandardBlagoIdentity}
  K^T =  J^T \Lambda^{2T}_{\Gamma} \Theta^T - R^T \Lambda^{T}_{\Gamma} R^T J^T \Theta^T.
\end{equation}
Here, the operators $R^T$, $J^T$, and $\Theta^T$ are defined as
follows: the time reversal operator,
$R^{T} : L^2([0,T] \times \Gamma) \rightarrow L^2([0,T] \times
\Gamma)$, is defined by
\begin{equation}
  R^T f (t, \cdot) = f(T - t, \cdot), \qquad \text{ for $0 < t < T$ },
\end{equation}
the time filtering operator,
$J^T : L^2([0,2T] \times \Gamma) \rightarrow L^2([0,T] \times
\Gamma)$, is given by,
\begin{equation}
  J^T f (t, \cdot) = \frac{1}{2} \int_t^{2T - t} f(s,\cdot) \,ds, \qquad \text{ for $0 < t < T$},
\end{equation}
and the zero extension operator,
$\Theta^T : L^2([0,T] \times \Gamma) \rightarrow L^2([0,2T] \times
\Gamma)$, is given by,
\begin{equation}
  \Theta^T f(t, \cdot) = \left\{
    \begin{array}{ll}
      f(t, \cdot) & 0 \leq t \leq T \\
      0 & T < t \leq 2T. \\ 
    \end{array}
  \right.
\end{equation}

In addition, we will use the restriction, $\rho^T : L^2([0,2T] \times
\Gamma) \rightarrow L^2([0,T] \times \Gamma)$, given by $\rho^T f =
f|_{[0,T] \times \Gamma}$. We will also use, for $r \in [0,T]$, the
family of orthogonal projections $P_r^T : L^2([0,T] \times \Gamma)
\rightarrow L^2([T-r,T] \times \Gamma)$, which too are obtained by
restriction. Lastly, for $s > 0$, we will use time delay operators,
given by
\begin{equation}
  Z_s \phi(t, \cdot) := \left\{
  \begin{array}{cl} 
    0 & \text{for $t \in [0,s]$} \\
    \phi(t-s, \cdot) & \text{for $t > s$}.
  \end{array}
  \right.
\end{equation}

We will need analogous operators defined on spaces of the form
$L^2([0,S] \times A)$, where $A \subset \overline{M}$ and $S > 0$. For
those operators, we use similar notation. For instance, we will also
write $R^S$ to denote the time-reversal operator on $L^2([0,S] \times
A)$. We note that, in all cases, our notation will only indicate the
appropriate final time $S$, since all four operators $R,J,\Theta,\rho$
act essentially in the temporal domain. We do not indicate the spatial
domain in our notation since it will be evident from context.

Finally, in some longer equations, we will suppress the spatial
dependence of functions in our notation. For example, let $F :
[0,T]\times M \rightarrow \R$ and $t \in \R$. Then, we will
occasionally write $F(t)$ to denote $F(t,\cdot)$.

\subsection{Moving Receivers}
\label{subsec:MovingRec}

In this section, we will construct the map $L$,
\begin{equation}
  \label{eqn:definingLAgain}
  L : f \mapsto u^f|_{[0,T] \times M(\Gamma,r)}, \quad \text{for $f
    \in L^2([0,T] \times \Gamma)$.}
\end{equation}
We refer to the procedure for constructing $L$ as
\emph{moving receivers}, since evaluating $L$ is tantamount to
extrapolating receivers into $M(\Gamma,r)$.  Moving receivers is
accomplished through Algorithm \ref{algo:movingReceivers}, and we
demonstrate the correctness of this algorithm via Lemma
\ref{thm:MovingRec}. We note that Lemma \ref{thm:MovingRec} is
essentially demonstrated in \cite[Lemma 7]{Bingham2008}. However, we
repeat the proof here, since it is constructive and forms the basis
for Algorithm \ref{algo:movingReceivers}.

\begin{algorithm}
  \begin{algorithmic}
    \FOR{$f \in C_0^\infty([0,T] \times \Gamma)$}
    \FORALL{$0 < t < T$}
    \FORALL{$0 < \alpha$}
    \STATE Let : $h = h_{\alpha,t}$ denote the solution to 
    \begin{equation*}
      P_r^T(K^T + \alpha) P_r^T h = P_r^T K^T Z_{T - t}f
    \end{equation*}
    \STATE Solve : the wave equation in $[T-r, T] \times M(\Gamma,r)$
    to obtain $u^{h_{\alpha,t}}(T,\cdot)$
    \ENDFOR
    \STATE Compute : 
    \begin{equation*}
      Lf(t) = u^f(t,\cdot)|_{M(\Gamma,r)} = \lim_{\alpha \rightarrow 0} u^{h_{\alpha,t}}(T,\cdot).
    \end{equation*}
    \ENDFOR
    \ENDFOR
  \end{algorithmic}
  \caption{
    \label{algo:movingReceivers}
    Continuum level moving receivers procedure.}
\end{algorithm}

\begin{lemma}
  \label{thm:MovingRec}
  The map $L$ can be constructed from the data $\Lambda_{\Gamma}^{2T}$
  and the known sub-manifold $(M(\Gamma,r),g)$. Furthermore, $L$ is a
  bounded operator,
  \begin{equation}
    L : L^2([0,T] \times \Gamma) \rightarrow L^2(M(\Gamma,r) \times [0,T]).
  \end{equation}
\end{lemma}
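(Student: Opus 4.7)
The plan is to verify that Algorithm \ref{algo:movingReceivers} computes $L$ correctly, following the approach of Bingham et al.\ \cite{Bingham2008}. The starting observation is that, by time-translation invariance of the wave equation, $u^f(t,\cdot) = u^{Z_{T-t}f}(T,\cdot)$. Hence for fixed $t \in (0,T)$ the task reduces to producing, from an unknown metric regime, the wave $u^{Z_{T-t}f}(T,\cdot)$ restricted to $M(\Gamma,r)$. My strategy is to replace the source $Z_{T-t}f$ by a new Neumann source $h$ supported in $[T-r,T]\times\Gamma$ that reproduces this final wave inside $M(\Gamma,r)$. Finite speed of propagation guarantees that for such $h$, $u^h(T,\cdot)$ is supported in $M(\Gamma,r)$, so the forward problem for $u^h$ can be solved knowing only $g|_{M(\Gamma,r)}$, which is exactly the ``Solve'' step of the algorithm.

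To pick $h$, I would set up the Tikhonov-regularized least-squares problem
\begin{equation*}
  \min_{h} \; \|W^T h - W^T Z_{T-t}f\|_{L^2(M)}^2 + \alpha \|h\|_{L^2([T-r,T]\times\Gamma)}^2,
\end{equation*}
written as a minimization over $h \in L^2([0,T]\times\Gamma)$ constrained to $P_r^T h = h$. Expanding the quadratic functional in terms of $K^T = (W^T)^\ast W^T$ and differentiating yields the normal equation
\begin{equation*}
  P_r^T (K^T + \alpha) P_r^T h = P_r^T K^T Z_{T-t} f,
\end{equation*}
which has a unique solution $h_{\alpha,t}$ for each $\alpha > 0$. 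The crucial point for constructibility is that, by the Blagove\v{s}\v{c}enski\u\i\ identity \eqref{eqn:StandardBlagoIdentity}, $K^T$ is determined by $\Lambda_\Gamma^{2T}$; therefore the normal equation, and hence $h_{\alpha,t}$, is computable from the data.

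Next I would prove the convergence $u^{h_{\alpha,t}}(T,\cdot) \to u^{Z_{T-t}f}(T,\cdot)$ in $L^2(M(\Gamma,r))$ as $\alpha \to 0^+$. By standard Tikhonov theory, this reduces to an approximate controllability statement: the set $\{W^T P_r^T h : h \in L^2([T-r,T]\times\Gamma)\}$ is dense in $L^2(M(\Gamma,r))$. This is the main technical obstacle, and it is where Tataru's sharp unique continuation theorem enters: a non-dense range would furnish a nonzero element of $L^2(M(\Gamma,r))$ orthogonal to every $W^T P_r^T h$, and dualizing through the wave equation (as in \cite{Bingham2008}, using finite speed of propagation to localize) contradicts Tataru's theorem. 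Combined with finite speed of propagation (which ensures $u^{h_{\alpha,t}}(T,\cdot)$ is actually supported in $M(\Gamma,r)$ rather than just close to being so), this yields the pointwise-in-$t$ recovery $Lf(t) = \lim_{\alpha \to 0} u^{h_{\alpha,t}}(T,\cdot)$.

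Finally, the boundedness claim $L : L^2([0,T]\times\Gamma) \to L^2([0,T]\times M(\Gamma,r))$ is independent of the algorithm and follows directly from the hidden regularity estimate of Lasiecka--Lions--Triggiani \cite{Lasiecka1991} that already underlies the boundedness of $W^T$: integrating $\|W^t f\|_{L^2(M)}^2 \le C \|f\|_{L^2([0,t]\times\Gamma)}^2$ in $t \in [0,T]$ and restricting spatially to $M(\Gamma,r)$ gives the desired bound. I expect the density / approximate controllability step to be the substantive one; the rest is bookkeeping around Tikhonov regularization and the Blagove\v{s}\v{c}enski\u\i\ identity.
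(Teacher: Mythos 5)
Your proposal follows essentially the same route as the paper's proof: time-translation invariance, Tikhonov regularization leading to the normal equation with $K^T$ computable via the Blagove\v{s}\v{c}enski\u\i{} identity, finite speed of propagation to localize the forward solve to $M(\Gamma,r)$, and Tataru's unique continuation theorem to obtain the approximate controllability needed for the $\alpha \to 0$ limit, with boundedness drawn from the same regularity result of \cite{Lasiecka1991}. The argument is correct and matches the paper's in all essential respects.
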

\begin{proof}
  We first note that the continuity of $L$ is demonstrated in
  \cite[Thm 2.0.0]{Lasiecka1991}, where it is shown that the map $f
  \mapsto u^f$ is bounded from $L^2([0,T] \times \Gamma) \rightarrow
  H^\beta([0,T] \times M)$ for $\beta = 3/5 - \epsilon$ and any
  $\epsilon > 0$. Since $H^\beta([0,T] \times M) \subset L^2([0,T]
  \times M)$ for $0 < \epsilon < 3/5$, and $M(\Gamma,r) \subset M$, it
  follows that $L$ is bounded.

  Because $L$ is bounded and $C_0^\infty([0,T]\times \Gamma)$ is dense in
  $L^2([0,T]\times \Gamma)$, it will suffice to show that $Lf$ can be
  constructed for any smooth $f$. We let $f \in C_0^\infty([0,T] \times \Gamma)$,
  and obtain $Lf$ by computing wavefield snapshots $Lf(t) = u^f(t,
  \cdot)|_{M(\Gamma,r)}$ for $t \in [0,T]$. To get $Lf(t)$, we first
  construct a family of sources $h_{\alpha,t} \in
  L^2([T-r,T]\times\Gamma)$ satisfying
  \begin{equation}
    \label{eqn:limiting_bndry_waves}
    \lim_{\alpha \rightarrow 0} u^{h_{\alpha,t}}(T,\cdot)|_{M(\Gamma,r)}
    = u^f(t,\cdot)|_{M(\Gamma,r)},
  \end{equation}
  where the limit is taken in $L^2(M(\Gamma,r))$. Since
  $\supp(h_{\alpha,t}) \subset [T-r,T]\times\Gamma$, finite speed of
  propagation for (\ref{eqn:1stForwardProb}) implies that $\supp(u^{h_{\alpha,t}}(s,\cdot)) \subset
  M(\Gamma,r)$ for $0 \leq s \leq T$. Thus, the waves
  $u^{h_{\alpha,t}}(T,\cdot)$ can be evaluated by solving
  (\ref{eqn:1stForwardProb}) in $M(\Gamma,r)$, and the wavefield
  snapshot $Lf(t)$ can be obtained from the limit
  (\ref{eqn:limiting_bndry_waves}).

  We now recall how the sources $h_{\alpha,t}$ can be obtained using
  the data $\Lambda_{\Gamma}^{2T}$. As in \cite{Bingham2008}, we
  consider the Tikhonov minimization problem,
  \begin{equation}
    \label{eqn:regMinBKLS}
    h_{\alpha,t} := \argmin_{h\in L^2([T-r,T]\times\Gamma)} \|u^h(T,\cdot) - u^f(t,\cdot) \|^2 + \alpha \|h\|^2.
  \end{equation}
  Since the operator $\p_t^2 - \Delta_g$ commutes with time
  translations, $u^f(t,\cdot) = u^{Z_{T-t}f}(T,\cdot)$. Using
  the operators defined above, we can rephrase (\ref{eqn:regMinBKLS})
  in the form
  \begin{equation}
    h_{\alpha,t} = \argmin_{h\in L^2([T-r,T]\times\Gamma)} \|W^TP h - W^TZ_{T-t}f \|^2 + \alpha \|h\|^2,
  \end{equation}
  where we have written $P = P_r^T$ to avoid some notational clutter.  Since
  the operator $W^T$ is bounded, \cite[Thm. 2.11]{Kirsch2011} implies
  that the unique solution to (\ref{eqn:regMinBKLS}) is given by:
  \begin{equation}
    h_{\alpha,t} = \left(P^*(W^T)^*W^T P + \alpha\right)^{-1} (W^T P)^* W^TZ_{T-t}f. 
  \end{equation}
  Because $K^T = (W^T)^*W^T$, we can rewrite this as,
  \begin{equation}
    \label{eqn:fAlphaBCM}
    h_{\alpha,t}  = (P K^T P + \alpha)^{-1} P K^TZ_{T-t}f.
  \end{equation}
  Since the operator $K^T$ can be constructed via the
  Blagove{\v{s}}{\v{c}}enski{\u\i} identity
  (\ref{eqn:StandardBlagoIdentity}), expression (\ref{eqn:fAlphaBCM})
  shows that $h_{\alpha,t}$ can be obtained from the data
  $\Lambda_{\Gamma}^{2T}$. 

  Finally, we show that (\ref{eqn:limiting_bndry_waves}) holds. We
  recall, if $g$ is smooth, that Tataru's Theorem \cite{Tataru1995}
  implies that $W^T P$ has dense range in $L^2(M(\Gamma,r))$, and that this
  also holds if $g$ is piece-wise smooth
  \cite{Kirpichnikova2012}. Thus, \cite[Lemma 1]{Oksanen2013} implies
  that $W^TP h_{\alpha,t} \rightarrow W^TZ_{T-t}f$ as $\alpha
  \rightarrow 0$. Hence, the sources $h_{\alpha.t}$ satisfy
  (\ref{eqn:limiting_bndry_waves}), which is what we wanted to show.
  \qquad
\end{proof}

\subsection{Moving Sources}
\label{subsec:MovingSrc}
As stated above, we refer to the procedure for constructing $\curlyL$ from
$L$ as \emph{moving sources}. We present the moving sources procedure
as Algorithm \ref{algo:movingSources} and demonstrate its validity in
Lemma \ref{thm:MovingSrc}.

\begin{algorithm}
  \begin{algorithmic}
    \FOR{$F \in C_0^\infty([0,T/2] \times \Gamma)$}
    \FORALL{$0 < t < T/2$}
    \FORALL{$0 < \alpha$}
    \STATE Let : $h = h_{\alpha,t}$ denote the solution to 
    \begin{equation*}
      P_r^{T/2}(K^{T/2} + \alpha) P_r^{T/2} h = P_r^{T/2} \curlyK^* Z_{T/2 - t}F,
    \end{equation*}
    where $\curlyK$ is given by (\ref{eqn:BlagoTypeInteriorOperatorExpr}).
    \STATE Solve : the wave equation in $[T/2-r,T/2] \times M(\Gamma,r)$ to obtain $u^{h_{\alpha,t}}(T/2,\cdot)$
    \ENDFOR
    \STATE Compute :
    \begin{equation*}
      \curlyL F(t) = w^F(t,\cdot)|_{M(\Gamma,r)} = \lim_{\alpha \rightarrow 0} u^{h_{\alpha,t}}(T/2,\cdot)
    \end{equation*}
    \ENDFOR
    \ENDFOR
  \end{algorithmic}
  \caption{
    \label{algo:movingSources}
    Continuum level moving sources procedure.}
\end{algorithm}

We show that $\curlyL$ can be constructed from $L$ via a transpostion
argument. With that goal in mind, let us introduce a final value problem
that coincides with the time-reversal of (\ref{eqn:2ndForwardProb}),
\begin{align}
  \label{eqn:FinalValueProblem}
  (\partial_t^2 - \LaplaceBeltrami) v(t,x) &= H(t,x), & (t,x) \in (0, T) \times M,\\ 
  \boundaryConds v(t,x) &= 0, & (t,x) \in(0,T) \times \p M \notag\\
  v(T,\cdot) = \partial_t v(T,\cdot) &= 0, & x \in M. \notag
\end{align}
Here, $H \in L^2([0,T] \times M(\Gamma,r))$, and we denote the
solution to (\ref{eqn:FinalValueProblem}) by $v^H$. We have the
following result concerning the transpose of $L$.

\begin{lemma}
  \label{lemma:Ltranspose}
  Let $F \in L^2([0,T] \times M(\Gamma,r))$, then,
  \begin{equation}
    \label{eqn:LAdjoint}
    R^T L^* R^T F =  w^F
    |_{[0,T]\times\Gamma}.
  \end{equation}
\end{lemma}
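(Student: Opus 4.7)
The plan is to establish the identity via a duality (Green's identity) argument, treating $v^H$ as the adjoint variable dual to $u^f$. Concretely, I will pair the interior source problem (\ref{eqn:FinalValueProblem}) against the boundary source problem (\ref{eqn:1stForwardProb}). Assume first that $f$ and $H$ are smooth and compatible so that $u^f$ and $v^H$ are classical. Starting from $\int_0^T\!\int_M (\partial_t^2 - \Delta_g) v^H \cdot u^f\, dV\, dt = \int_0^T\!\int_M H\, u^f\, dV\, dt$, I integrate by parts twice in time and once via Green's formula in space. The temporal boundary terms at $t=0$ vanish because $u^f(0,\cdot) = \partial_t u^f(0,\cdot) = 0$, and those at $t=T$ vanish because $v^H(T,\cdot) = \partial_t v^H(T,\cdot) = 0$. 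In the spatial Green's identity, $\partial_\nu v^H \equiv 0$ on $\partial M$ kills one boundary term, while $\partial_\nu u^f$ is supported in $[0,T]\times\Gamma$ and equals $f$ there. Using $(\partial_t^2 - \Delta_g) u^f = 0$, what remains is the bilinear identity
\begin{equation*}
\langle u^f, H \rangle_{L^2([0,T]\times M)} = \langle f, v^H|_{[0,T]\times\Gamma} \rangle_{L^2([0,T]\times\Gamma)}.
\end{equation*}

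Next I would identify $v^H$ with the time-reversal of $w^{R^T H}$. A direct substitution $\tilde w(t,x) := w^{R^T H}(T-t,x)$ shows that $\tilde w$ satisfies (\ref{eqn:FinalValueProblem}) with right-hand side $H$ and hence $v^H = R^T w^{R^T H}$. Taking $H = R^T F$ for $F \in L^2([0,T]\times M(\Gamma,r))$ gives $v^{R^T F}|_{[0,T]\times\Gamma} = R^T w^F|_{[0,T]\times\Gamma}$. Since $R^T F$ is still supported in $[0,T]\times M(\Gamma,r)$, the left-hand side of the bilinear identity collapses to $\langle Lf, R^T F\rangle$. Comparing with $\langle Lf, R^T F\rangle = \langle f, L^* R^T F\rangle$ and letting $f$ range over $L^2([0,T]\times\Gamma)$ yields $L^* R^T F = R^T w^F|_{[0,T]\times\Gamma}$, and applying $R^T$ (which is an involution) to both sides produces (\ref{eqn:LAdjoint}).

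The main obstacle is that the integration-by-parts computation requires regularity that is not available for general $f \in L^2([0,T]\times\Gamma)$ and $F \in L^2([0,T]\times M(\Gamma,r))$. I would handle this by a density argument: take $f \in C_0^\infty((0,T)\times\Gamma)$ and $F \in C_0^\infty((0,T)\times M(\Gamma,r))$, for which both $u^f$ and $w^F$ are classical solutions with vanishing compatibility at the endpoints, making the termwise integrations by parts rigorous. I would then extend the identity by continuity, invoking the boundedness of $L$ from Lemma~\ref{thm:MovingRec}, the analogous interior regularity/trace estimate for $F \mapsto w^F|_{[0,T]\times\Gamma}$ (which follows from the hidden regularity results of Lasiecka--Lions--Triggiani cited in the proof of Lemma~\ref{thm:MovingRec}, applied to the dual problem), and boundedness of $R^T$ on the relevant $L^2$ spaces. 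Together these guarantee that both sides of (\ref{eqn:LAdjoint}) depend continuously on $F \in L^2([0,T]\times M(\Gamma,r))$, so the identity passes to the $L^2$ closure.
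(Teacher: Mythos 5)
Your proposal is correct and follows essentially the same route as the paper: a Green's identity/integration-by-parts argument on smooth, compactly supported data to obtain $L^*H = v^H|_{[0,T]\times\Gamma}$, followed by the observation that $v^H = R^T w^{R^T H}$ (the paper verifies the equivalent statement $R^T v^{R^T F} = w^F$ by checking that it solves (\ref{eqn:2ndForwardProb})), and a density/continuity argument using the boundedness of $L$ and of $F \mapsto v^F|_{[0,T]\times\Gamma}$ from \cite{Lasiecka1991}. No substantive differences.
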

\begin{proof}
  We first note that by \cite[Thm 2.0.0]{Lasiecka1991}, the map $F
  \mapsto v^F|_{[0,T] \times \Gamma}$ is bounded from $L^2([0,T]
  \times M(\Gamma,r)) \rightarrow H^\beta([0,T] \times \Gamma)$ where
  $\beta = 3/5$. Thus it is also a bounded operator mapping $L^2([0,T]
  \times M(\Gamma,r)) \rightarrow L^2([0,T] \times \Gamma)$. Since the
  map $F \mapsto w^F|_{[0,T] \times \Gamma}$ is the time reversal of
  this map, it is also bounded.
    
  To prove (\ref{eqn:LAdjoint}), we let $F \in C_0^{\infty}([0,T] \times
  M(\Gamma,r))$, $h \in C_0^\infty([0,T] \times \Gamma)$, and argue by
  density. Using the divergence theorem, the fact that $u^h$ solves
  (\ref{eqn:1stForwardProb}), and that $v^F$ solves
  (\ref{eqn:FinalValueProblem}), we see,
  \begin{align*}
    \langle F, &Lh \rangle_{L^2([0,T] \times M(\Gamma,r))} = \langle F, u^h \rangle_{L^2([0,T] \times M)} \\ 
    &= \langle (\p_t^2 - \Delta_g) v^F, u^h \rangle_{L^2([0,T] \times M)} - \langle v^F, (\p_t^2 - \Delta_g) u^h\rangle_{L^2([0,T] \times M)}\\
    &= \langle -\pND v^F, u^h \rangle_{L^2([0,T] \times \p M)} - \langle v^F, -\pND u^h\rangle_{L^2([0,T] \times \p M)}\\
    &= \langle v^F, h \rangle_{L^2([0,T] \times \Gamma)}.
  \end{align*}
  On the last line, we have used (\ref{eqn:LAdjoint}) and the support
  properties of $F$ and $h$. By the density of $C_0^\infty$ spaces in
  their respective $L^2$ spaces and the boundedness of the operator
  $L$, we conclude that
  \begin{equation}
    \label{eqn:LTranspose}
    L^* F = v^F|_{[0,T] \times \Gamma}.
  \end{equation}

  Let us denote $R = R^T$ and show that $Rv^{RF} = w^F$. To see this,
  we first note that:
  \begin{equation*}
      (\p_t^2 - \LaplaceBeltrami)(Rv^{RF})(t) = (\p_t^2 -
    \LaplaceBeltrami)v^{RF}(T - t) = RF(T - t) = F(t).
  \end{equation*}
  Furthermore, $\p_t (Rv^{RF})(0) = - \p_t v^{RF}(T - 0) = -\p_t
  v^{RF}(0) = 0,$ and $(Rv^{RF})(0) = v^{RF}(T) = 0$. Finally,
  $(Rv^{RF})|_{[0,T] \times \p M} = R((v^{RF})|_{[0,T] \times \p M}) =
  0$. Hence, $Rv^{RF}$ solves (\ref{eqn:2ndForwardProb}) with
  right-hand side $F$. By uniqueness of solutions to
  (\ref{eqn:2ndForwardProb}), it follows that $w^F = Rv^{RF}$. Thus,
  in conjunction with (\ref{eqn:LTranspose}),
  \begin{equation*}
    RL^*RF = Rv^{RF}|_{[0,T] \times \Gamma} = w^F|_{[0,T] \times
      \Gamma},
  \end{equation*}
  which is what we wanted to show. \qquad
\end{proof} 

Next, we introduce a Blagove{\v{s}}{\v{c}}enski{\u\i} type identity
relating the inner-product between $w^F(T/2,\cdot)$ and
$u^h(T/2,\cdot)$ to an inner-product between $F$ and an operator
applied to $h$. We remark that our proof follows an analogous strategy
to the technique used to derive (\ref{eqn:StandardBlagoIdentity}).
\begin{lemma}
  Let $F \in L^2([0,T/2] \times M(\Gamma,r))$ and $h \in L^2([0,T/2]
  \times \Gamma)$. Then,
  \begin{equation}
    \label{eqn:BlagoTypeInteriorBoundary}
    \begin{array}{l}
      \langle w^F(T/2,\cdot),u^h(T/2,\cdot) \rangle_{L^2(M)} = \langle F,\curlyK h\rangle_{L^2([0,T/2] \times
        M(\Gamma,r))},\\
    \end{array}
  \end{equation}
  where,
  $\curlyK : L^2([0,T/2] \times \Gamma) \rightarrow L^2([0,T/2]\times
  M(\Gamma,r))$ is bounded and can be constructed by,
  \begin{equation}
    \label{eqn:BlagoTypeInteriorOperatorExpr}
    \curlyK  = J^{T/2} L \Theta^{T/2} - \rho^{T/2} R^TLR^T \Theta^{T/2}J^{T/2}\Theta^{T/2}.
  \end{equation}
\end{lemma}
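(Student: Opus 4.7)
The plan is to mirror the derivation of the standard Blagove\v{s}\v{c}enski\u{\i} identity (\ref{eqn:StandardBlagoIdentity}): turn the interior inner product at time $T/2$ into a $1+1$-dimensional wave equation in the two time variables and then invert it with d'Alembert's formula. Concretely, I set
\begin{equation*}
\psi(t,s) := \langle w^F(t,\cdot),\, u^h(s,\cdot)\rangle_{L^2(M)}, \qquad (t,s) \in [0,T]\times[0,T],
\end{equation*}
first for $F \in C_0^\infty((0,T/2)\times M(\Gamma,r))$ and $h \in C_0^\infty((0,T/2)\times \Gamma)$, so that I may differentiate freely; the conclusion will then follow by density and the boundedness of the operators involved. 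Using $(\p_t^2 - \Delta_g)w^F = F$, $(\p_s^2 - \Delta_g)u^h = 0$, Green's identity, and the boundary conditions $\pND w^F = 0$ on $\p M$ together with $\pND u^h = h \cdot \mathbf{1}_\Gamma$, I get
\begin{equation*}
(\p_t^2 - \p_s^2)\psi(t,s) = \langle F(t,\cdot), u^h(s,\cdot)\rangle_{L^2(M)} - \langle w^F(t,\cdot)|_\Gamma,\, h(s,\cdot)\rangle_{L^2(\Gamma)},
\end{equation*}
while $\psi(0,s) = \p_t\psi(0,s) = 0$ from the zero initial data of $w^F$.

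Next, d'Alembert's formula for the $1+1$-dimensional wave equation on the quarter-plane (with the vanishing Cauchy data above) gives
\begin{equation*}
\psi(T/2,T/2) = \tfrac{1}{2}\int_0^{T/2}\!\!\int_\tau^{T-\tau} (\p_t^2-\p_s^2)\psi(\tau,\sigma)\, d\sigma\, d\tau,
\end{equation*}
and the inner integral $\tfrac{1}{2}\int_\tau^{T-\tau}(\cdot)\, d\sigma$ is exactly $J^{T/2}$ evaluated at $\tau$. Because $F(\tau,\cdot)$ is supported in $M(\Gamma,r)$ and $u^h = u^{\Theta^{T/2} h}$ on $[0,T]$, the first piece becomes $\langle F(\tau), L(\Theta^{T/2} h)(\sigma)\rangle_{L^2(M(\Gamma,r))}$, and integrating against $\sigma$ produces $\langle F, J^{T/2} L \Theta^{T/2} h\rangle_{L^2([0,T/2]\times M(\Gamma,r))}$.

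For the second piece I need to trade $w^F(\tau,\cdot)|_\Gamma$ for something involving $L$. This is where Lemma \ref{lemma:Ltranspose} enters: applied to the zero extension $\Theta^{T/2}F \in L^2([0,T]\times M(\Gamma,r))$ (noting that $w^{\Theta^{T/2} F} = w^F$), it yields $w^F|_{[0,T]\times\Gamma} = R^T L^* R^T \Theta^{T/2} F$. The second piece thus equals
\begin{equation*}
-\langle \rho^{T/2} R^T L^* R^T \Theta^{T/2} F,\; J^{T/2} \Theta^{T/2} h\rangle_{L^2([0,T/2]\times \Gamma)},
\end{equation*}
after using that $h$ vanishes past $T/2$ to introduce $\Theta^{T/2}$ on the right. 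I now move all operators to the $F$-side by taking adjoints, using $(\Theta^{T/2})^* = \rho^{T/2}$, $(\rho^{T/2})^* = \Theta^{T/2}$, and $(R^T)^* = R^T$; this produces the second term in the stated $\curlyK$. Combining the two pieces gives (\ref{eqn:BlagoTypeInteriorBoundary}) with $\curlyK$ as in (\ref{eqn:BlagoTypeInteriorOperatorExpr}).

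Finally, boundedness of $\curlyK : L^2([0,T/2]\times \Gamma)\to L^2([0,T/2]\times M(\Gamma,r))$ is immediate from the continuity of $L$ (established in Lemma \ref{thm:MovingRec}) and of each of $J^{T/2}$, $\Theta^{T/2}$, $\rho^{T/2}$, $R^T$, the latter two being contractions. I expect the main obstacle to be bookkeeping: keeping the various time windows straight, inserting $\Theta^{T/2}$ and $\rho^{T/2}$ at the right places so that Lemma \ref{lemma:Ltranspose} is applicable, and getting the four adjoint identities in the correct order. The hidden-regularity bounds of Lasiecka--Triggiani used in the previous subsection also justify the density/continuity step needed to go from smooth compactly supported $F,h$ to the general $L^2$ statement.
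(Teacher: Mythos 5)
Your proposal is correct and follows essentially the same route as the paper: the two-time inner product $\psi(t,s)=I(t,s)$, the $1{+}1$-dimensional wave equation obtained via Green's identity, the Duhamel/d'Alembert inversion at $(T/2,T/2)$, the substitution of Lemma \ref{lemma:Ltranspose} for $w^F|_{[0,T]\times\Gamma}$, and the adjoint bookkeeping with $\Theta$, $\rho$, $R$. The only (immaterial) difference is that the paper gets boundedness of $\curlyK$ abstractly from $\curlyK=(\Wint^{T/2})^*W^{T/2}$ rather than from the explicit formula.
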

\begin{proof}
  To simplify our notation, for this proof we let $R = R^T$, $J =
  J^{T/2}$, $\Theta = \Theta^{T/2}$, and $\rho = \rho^{T/2}$.

  To see that $\curlyK$ is bounded, let us write $\Wint^{T/2} : F
  \mapsto w^F|_{[0,T/2] \times M(\Gamma,r)}$. Then $\Wint^{T/2}$ is
  bounded by \cite{Lions1972a}. By definition, $\curlyK =
  (\Wint^{T/2})^* W^{T/2}$, hence $\curlyK$ is bounded since it is a
  composition of bounded operators.

  Since $\curlyK$ is bounded, we  argue by density. Let
  $F \in C_0^\infty([0,T/2] \times M(\Gamma,r))$ and $h \in
  C_0^\infty([0,T/2] \times \Gamma)$.  Because we are interested in
  obtaining the inner-product $\langle
  w^F(T/2,\cdot),u^h(T/2,\cdot)\rangle_{L^2(M)}$, we will consider the
  family of inner-products $I(t,s) := \langle
  w^F(t,\cdot),u^h(s,\cdot)\rangle_{L^2(M)}$, parametrized with $0\leq
  s \leq T$ and $0 \leq t \leq T/2$. We note that this quantity
  behaves like a one-dimensional wave with a forcing term:
  \begin{align*}
    (\p_t^2 - \p_s^2)I(t,s) &=(\p_t^2 - \p_s^2) \langle w^F(t), u^h(s)\rangle_{L^2(M)} \\
                            &=\langle \Delta_g w^F(t) + F(t), u^h(s) \rangle_{L^2(M)} - \langle w^F(t), \Delta_g u^h(s)\rangle_{L^2(M)},
  \end{align*}
  since $w^F$ and $u^h$ solve (\ref{eqn:2ndForwardProb}) and
  (\ref{eqn:1stForwardProb}) respectively. Next, we apply the
  divergence theorem, use Lemma \ref{lemma:Ltranspose} and the fact
  that $\p_\nu w^F = 0$, and appeal to the support properties of $F$
  and $h$, to find
  \begin{align*}
    (\p_t^2 - \p_s^2)I(t,s) &= \langle F(t), u^h(s) \rangle_{L^2(M)} - \langle w^F(t), \p_\nu u^h(s) \rangle_{L^2(\p M)}\\
    &=\langle F(t), L  \Theta h(s) \rangle_{L^2(M(\Gamma,r))} - \langle RL^* R \Theta F(t), \Theta h(s)\rangle_{L^2(\Gamma)}.
  \end{align*}
  Then, we note that $I(0,\cdot) = \p_t I(0,\cdot) = 0$, since
  $w^F(0,\cdot) = \partial_t w^F(0,\cdot) = 0$.  Thus $I$ solves an
  inhomogeneous one dimensional wave equation in the rectangle $(t,s)
  \in (0,T/2)\times(0,T)$, with unit wavespeed and vanishing initial
  conditions. By finite speed of propagation, the boundary condition
  at $s = 0$ does not affect the solution $I(t,s)$ when $s \geq t$.
  Hence, for $s \geq t$ we can solve for $I(t,s)$ by Duhamel's
  principle,
  \begin{equation}
    I(t,s) = \frac{1}{2} \int_0^t \int_{s-(t-\tau)}^{s+(t-\tau)} \langle F(\tau), L \Theta h(\sigma)\rangle_{L^2(M(\Gamma,r))} - \langle RL^* R\Theta F(\tau), \Theta h(\sigma) \rangle_{L^2(\Gamma)} \,d\sigma\,d\tau .
  \end{equation}
  Setting $s = t = T/2$ we see,
  \begin{align*}
    I(T/2&,T/2) = \langle w^F(T/2), u^h(T/2) \rangle_{L^2(M)} \\
               &= \frac 1 2 \int_0^{T/2} \int_{t}^{T - t} \langle F(t), L \Theta h(s)\rangle_{L^2(M(\Gamma,r))} - \langle RL^* R \Theta F(t), \Theta h(s)\rangle_{L^2(\Gamma)} \,ds \,dt\\
               &= \left\langle F, J L \Theta h\right)_{L^2([0,T/2] \times M(\Gamma,r))} - \langle RL^* R  \Theta F, \Theta J \Theta h \rangle_{L^2([0,T/2] \times \Gamma)}\\
               &= \left\langle F, (J L \Theta - \rho RLR\Theta J \Theta) h\right\rangle_{L^2([0,T/2] \times M(\Gamma,r))} .
  \end{align*}
  Thus we conclude that
  $\curlyK = JL\Theta - \rho R L R \Theta J \Theta$.  \qquad
\end{proof}

\begin{lemma}
  \label{thm:MovingSrc}
  The map $\curlyL$ can be constructed from the operator $L$ and the
  known sub-manifold $(M(\Gamma,r),g)$. Moreover, $\curlyL$ is a
  bounded operator,
  \begin{equation}
    \curlyL : L^2([0,T/2]\times M(\Gamma,r)) \rightarrow
    L^2([0,T/2]\times M(\Gamma,r)).
  \end{equation}
\end{lemma}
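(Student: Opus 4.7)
The plan is to mirror the Tikhonov-regularization argument of Lemma \ref{thm:MovingRec}, replacing the connecting operator $K^T$ by the operator $\mathcal{K}$ of (\ref{eqn:BlagoTypeInteriorOperatorExpr}). Boundedness of $\mathcal{L}$ is a direct consequence of the standard energy estimate for (\ref{eqn:2ndForwardProb}): the map $F \mapsto w^F$ is bounded from $L^2([0, T/2] \times M)$ into $C([0, T/2]; L^2(M))$ (see e.g.\ \cite{Lions1972a}), and restriction to $[0, T/2] \times M(\Gamma, r)$ is continuous. For the constructive part, by density and boundedness it suffices to treat $F \in C_0^\infty([0, T/2] \times M(\Gamma, r))$ and produce the snapshots $\mathcal{L}F(t) = w^F(t, \cdot)|_{M(\Gamma, r)}$ for each $t \in [0, T/2]$.

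Time-translation invariance of $\partial_t^2 - \Delta_g$ yields $w^F(t, \cdot) = w^{Z_{T/2-t}F}(T/2, \cdot)$, so it is enough to approximate $w^G(T/2, \cdot)|_{M(\Gamma, r)}$ for $G = Z_{T/2-t}F$ by waves $u^h(T/2, \cdot)|_{M(\Gamma, r)}$ driven by boundary sources $h$ supported in $[T/2-r, T/2] \times \Gamma$. Finite speed of propagation guarantees that any such $u^h(T/2, \cdot)$ is supported in $M(\Gamma, r)$, so the forward evaluation uses only the known metric $g|_{M(\Gamma, r)}$. I would define $h_{\alpha, t}$ as the Tikhonov minimizer of $\|u^h(T/2, \cdot) - w^F(t, \cdot)\|_{L^2(M)}^2 + \alpha \|h\|^2$ over $h \in L^2([T/2-r, T/2] \times \Gamma)$. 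Its normal equations take the form
$$P_r^{T/2}(K^{T/2} + \alpha) P_r^{T/2} h_{\alpha, t} = P_r^{T/2} (W^{T/2})^* w^{Z_{T/2-t}F}(T/2, \cdot).$$

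The essential algebraic step is to rewrite the right-hand side using the Blagove\v{s}\v{c}enski\u{\i}-type identity (\ref{eqn:BlagoTypeInteriorBoundary}), which for every $G \in L^2([0, T/2] \times M(\Gamma, r))$ and every $h \in L^2([0, T/2] \times \Gamma)$ gives $\langle (W^{T/2})^* w^G(T/2, \cdot), h\rangle = \langle G, \mathcal{K} h\rangle$, hence $(W^{T/2})^* w^G(T/2, \cdot) = \mathcal{K}^* G$. Taking $G = Z_{T/2-t}F$ reduces the normal equations to the ones appearing in Algorithm \ref{algo:movingSources}. Both $K^{T/2}$ (obtained from $\Lambda_\Gamma^{2T}$ via the standard Blagove\v{s}\v{c}enski\u{\i} identity (\ref{eqn:StandardBlagoIdentity})) and $\mathcal{K}$ (obtained from $L$ and the known geometry via (\ref{eqn:BlagoTypeInteriorOperatorExpr})) are thus constructible from the available data. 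Convergence $u^{h_{\alpha, t}}(T/2, \cdot)|_{M(\Gamma, r)} \to w^F(t, \cdot)|_{M(\Gamma, r)}$ as $\alpha \to 0$ follows the template of Lemma \ref{thm:MovingRec}: Tataru's theorem \cite{Tataru1995} (or its extension \cite{Kirpichnikova2012} in the piecewise-smooth case) provides density of $\{u^h(T/2, \cdot)|_{M(\Gamma, r)} : \supp h \subset [T/2-r, T/2] \times \Gamma\}$ in $L^2(M(\Gamma, r))$, and \cite[Lemma 1]{Oksanen2013} then delivers the required limit. The main point of care is the adjoint identification $(W^{T/2})^* w^G(T/2, \cdot) = \mathcal{K}^* G$ together with the correct use of the time-delay $Z_{T/2 - t}$; once these are in place, the remainder is a direct transposition of the moving receivers proof.
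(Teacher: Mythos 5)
Your proposal is correct and follows essentially the same route as the paper's proof: boundedness via \cite{Lions1972a}, a Tikhonov minimization over $h \in L^2([T/2-r,T/2]\times\Gamma)$ with normal equations $(PK^{T/2}P+\alpha)h = P(W^{T/2})^*w^{Z_{T/2-t}F}(T/2,\cdot)$, the identification $(W^{T/2})^*w^G(T/2,\cdot)=\curlyK^* G$ from (\ref{eqn:BlagoTypeInteriorBoundary}), and convergence via Tataru's theorem together with \cite[Lemma 1]{Oksanen2013}. No gaps; the only difference is that the paper defers the practical evaluation of $\curlyK^*$ to Section \ref{sec:MovingSrcsModification}, which does not affect the argument.
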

\begin{proof}

  We begin by noting that the boundedness of $\curlyL$ is known, see
  e.g.  \cite{Lions1972a}.
  
  
  We will ultimately need to obtain $\curlyK^*$, and any method to
  transpose $\curlyK$ will suffice. However, we remark that evaluating
  $\curlyK^*$ by transposing the operator expression
  (\ref{eqn:BlagoTypeInteriorBoundary}) would require one to construct
  $L^*$, which would entail a similar cost to constructing $\curlyK^*$
  itself. We give an efficient method to evaluate $\curlyK^*$ in
  Section \ref{sec:MovingSrcsModification}.


The strategy that we use to construct $\curlyL$ follows a similar
pattern to the method which we used to construct $L$. For
a source $F \in C_0^\infty([0,T/2]\times M(\Gamma,r))$ and time $t \in
[0,T/2]$, we will obtain the wavefield snapshot $\curlyL F(t) =
w^F(t,\cdot)|_{M(\Gamma,r)}$ by finding a family of sources
$h_{\alpha,t} \in L^2([0,T/2-r]\times \Gamma)$ for which
$u^{h_{\alpha,t}}(T/2,\cdot) \rightarrow
w^F(T/2,\cdot)|_{M(\Gamma,r)}$. We then evaluate
$u^{h_{\alpha,t}}(T/2,\cdot)$ by solving (\ref{eqn:1stForwardProb}) in
$[0,T/2]\times M(\Gamma,r)$ and obtain $\curlyL F(t)$ by taking the
limit as $\alpha \rightarrow 0$.

Let $\alpha > 0$. To obtain the source $h_{\alpha,t}$ we consider the
following Tikhonov problem,
\begin{equation}
  h_{\alpha,t} := \argmin_{h \in L^2([T/2-r,T/2] \times \Gamma)} \|u^h(T/2,\cdot) - w^F(t,\cdot) \|_{L^2(M)}^2 + \alpha \|h\|^2.
\end{equation}
We note that this regularized control problem is structurally similar
to the problem (\ref{eqn:regMinBKLS}), however the present problem has
a control time of $T/2$ and its target state is $w^F(t,\cdot)$. Thus
by the argument given in the proof of Lemma \ref{thm:MovingRec}, this
problem has a unique solution, $h_{\alpha,t}$, given by,
\begin{equation}
  h_{\alpha,t} = ((W^{T/2}P)^*(W^{T/2}P) + \alpha I)^{-1} (W^{T/2}
  P)^* w^F(t,\cdot),
\end{equation}
where we have written $P$ in place of $P_r^{T/2}$ for notational
clarity. Now, we note that $w^F(t,\cdot) = w^{Z_{T/2 -
    t}F}(T/2,\cdot)$, so we can use equation
(\ref{eqn:BlagoTypeInteriorBoundary}) to conclude that
$(W^{T/2})^*w^F(t,\cdot) = \curlyK^* Z_{T/2 - t}F$. Since
$(W^{T/2}P)^*W^{T/2}P = PK^{T/2}P$, we find
\begin{equation}
  \label{eqn:IntSrcControl}
  h_{\alpha,t} = (P K^{T/2} P + \alpha I)^{-1} P\curlyK^*Z_{T/2 - t}F.
\end{equation}
Thus $h_{\alpha,t}$ can be obtained from known quantities. Finally, by
Lemma 1 in \cite{Oksanen2013}, we have that
$u^{h_{\alpha,t}}(T/2,\cdot)|_{M(\Gamma,r)} \rightarrow
w^F(T/2,\cdot)|_{M(\Gamma,r)}$. \quad
\end{proof}

\section{Computational examples}
\label{sec_comp}

In this section, we present computational examples that demonstrate
both the receiver moving procedure discussed in Section
\ref{subsec:MovingRec} and the source moving procedure discussed in
Section \ref{subsec:MovingSrc}. We demonstrate our methods in a
conformally Euclidean setting, however, we stress that our techniques
can be applied in the general Riemannian setting.

\subsection{Forward modeling and discretization}

In our computational experiment, we take $M = \R \times [-1,0]$ with a
conformally Euclidean metric $g = c^{-2}dx^2$. For the wave-speed $c$,
we use $c(x,y) = 1-y$. We simulate waves propagating for $2T$ time
units, where $T = 2.0$, and make source and receiver measurements on
the set $[0,2T] \times \Gamma$, where $\Gamma = [-\ell,\ell] \times
\{0\} \subset \p M$ and $\ell = 3.1$.  The wave-speed $c$ is known in
Euclidean coordinates on the subset $M(\Gamma,r)$ where $r = 0.5$. Let
us point out that $\Gamma$ is strictly convex in the sense of the
second fundamental form of $(M,g)$.

For sources, we use a basis of Gaussian pulses of the form
\begin{equation*}
  \varphi_{i,j}(t,x) = C \exp\left(-a_t (t-t_{s,i})^2 -a_x
  (x-x_{s,j})^2\right),
\end{equation*}
with parameters $a_t = a_x = 1.382 \cdot 10^3$, and we choose $C$ to
normalize the $\varphi_{i,j}$ in $L^2([0,T]\times \Gamma, \dtdS)$.
Sources are applied at regularly spaced points $(x_{s,j}, 0)$ with
$x_{s,j} = -3.0 + (j-1)\Delta x_s$ for $j = 1,\ldots,N_{x,s}$ and
times $t_{s,i} = 0.025 + (i-1) \Delta t_s$ for $i=1,\ldots,N_{t,s}$.
The source offset $\Delta x_s$ and time between source applications
$\Delta t_s$ are both taken to be $\Delta x_s = \Delta t_s = .025$. At
each of the $N_{x,s} = 241$ source positions we apply $N_{t,s} = 79$
sources. For each basis function, we record the Dirichlet trace data
at regularly spaced points $(x_{r,k},0)$ with $x_{r,k} = -3.1 +
(k-1)\Delta x_r$ for $k = 1,\ldots,N_{x,r}$ at times $t_{r,l} = (l-1)
\Delta t_r$ for $l=1,\ldots,N_{t,r}$.  The receiver offset $\Delta
x_r,$ satisfies $\Delta x_r = 0.5 \Delta x_s$ resulting in $N_{x,r} =
497$ receiver positions.  The time between receiver measurements,
$\Delta t_r$, satisfies $\Delta t_r = 0.1 \Delta t_s$, resulting in
$N_{t,r} = 1601$ measurements at each receiver position.

We discretize the Neumann-to-Dirichlet map by solving the forward
problem for each source $\varphi_{i,j}$ and recording its Dirichlet
trace at the receiver positions and times described above. That is, we
simulate the following data,
\begin{equation}
  \label{eqn:DiscreteNtD}
  \left\{ \Lambda_\Gamma^{2T}\varphi_{i,j}(t_{r,l},x_{r,k}) =
  u^{\varphi_{i,j}}(t_{r,l},x_{r,k}) :
  \begin{array}{l}
    i=1,\ldots,N_{t,s}, ~j = 1,\ldots,N_{x,s},\\ l=1,\ldots,N_{t,r},
    ~k = 1,\ldots,N_{x,r}
  \end{array}
  \right\}.
\end{equation}
To perform the forward modelling, we use a continuous Galerkin finite
element method with piecewise linear Lagrange polynomial elements and
implicit Newmark time-stepping. This is implemented using the FEniCS
package \cite{LoggMardalEtAl2012a}.

For $0 \leq \tau_1 < \tau_2 \leq T$ we define $S_{\tau_1}^{\tau_2} :=
\linearSpan\{\varphi_{i,j} : \tau_1 < t_{s,j} < \tau_2\}$, and let
$S^{\tau} = S_0^{\tau}$.  We note that, since the sources
$\varphi_{ij}$ are well localized in time, the space
$S_{\tau_1}^{\tau_2}$ serves as a finite dimensional substitute for
the spaces $L^2([\tau_1,\tau_2] \times \Gamma)$. Then, to apply the
moving receivers and moving sources procedures we need the operators
$K^\tau$ for $\tau = T$ and $\tau = T/2$ respectively. Thus, for $\tau
= T, T/2$ we discretize the connecting operator $K^{\tau}$ by
computing its action as an operator on $S^\tau$.  We accomplish this
by restricting the discrete Neumann-to-Dirichlet data,
(\ref{eqn:DiscreteNtD}), to $S^\tau$ and computing a discrete analog
of (\ref{eqn:StandardBlagoIdentity}). Specifically, we first compute
the Gram matrix $[G^\tau]_{ij} = \langle \varphi_i,\varphi_j
\rangle_{L^2([0,\tau]\times\Gamma, \dtdS)}$ and its inverse
$[G^{\tau}]^{-1}$. Then, for $A = J^\tau \Lambda_{\Gamma}^{2\tau}$,
$R^\tau\Lambda_{\Gamma}^{\tau}$ and $R^\tau J^\tau$, we compute the
matrix for $A$ acting on $S^\tau$ by:
\begin{equation*}
  [A]_{ij} = \sum_k [G^{\tau}]^{-1}_{ik} \langle\varphi_k, A
  \varphi_j\rangle_{L^2([0,\tau]\times\Gamma, \dtdS)}.
\end{equation*}
Finally, we use these matrices to compute the matrix for $K^\tau$:
\begin{equation}
  \label{eqn:discreteBlago}
  [K^\tau] = [J^\tau \Lambda_{\Gamma}^{2\tau}] - [R^\tau \Lambda_{\Gamma}^{\tau}] [R^\tau J^\tau].
\end{equation}

The control problems introduced in the moving receivers and moving
sources problems are posed over $L^2([\tau -r, \tau] \times \Gamma)$
for $\tau = T,T/2$ respectively. In both cases we must solve linear
problems of the form $(PK^\tau P + \alpha) h_\alpha = Pb$ where $b$ is
a function in $L^2([0,\tau] \times \Gamma)$ and $P$ is the projection
$P : L^2([0,\tau] \times \Gamma) \rightarrow L^2([\tau-r,\tau] \times
\Gamma)$. To approximate the action of $P$, we construct a mask $[P]$
that selects the indices belonging to $S_{\tau-r}^\tau$. We then
recast the control problem in the finite dimensional case by finding
the coefficient vector $[h_\alpha]$ for a function $h_\alpha \in
S_{\tau-r}^\tau$ satisfying:
\begin{equation}
  \label{eqn:discreteControl}
  ([P][K^\tau][P] + \alpha) [h_\alpha] = [P][b],
\end{equation}
where $[b]$ denotes the coefficients of the projection of $b$ onto
$S^{\tau}$. We solve (\ref{eqn:discreteControl}) using restarted GMRES
with an appropriate choice of $\alpha$, documented below.

The last step in both the moving receivers and moving sources
procedures is to solve (\ref{eqn:1stForwardProb}) with the source
$h_\alpha$ given by (\ref{eqn:discreteControl}) in order to compute
$u^{h_\alpha}(\tau,\cdot)|_{M(\Gamma,r)}$. To do this, note that
$h_\alpha \in S_{\tau-r}^\tau$, so $h_\alpha$ is effectively supported
in $[\tau-r,\tau] \times \Gamma$. Thus by finite speed of propagation
and the fact that $c$ is known in $M(\Gamma,r)$ we can compute
$u^{h_\alpha}(\tau,\cdot)$ by solving (\ref{eqn:1stForwardProb}) using
the same computational scheme as used to generate
(\ref{eqn:DiscreteNtD}) and then restricting the result to
$M(\Gamma,r)$.

\subsection{Computational implementation of moving receivers}

We now specialize the preceding discussion to the moving receivers
setting. For this problem, we want to compute an approximation to
$u^f(t,\cdot)|_{M(\Gamma,r)}$ for $t \in [0,T]$ and $f \in L^2([0,T]
\times \Gamma)$. By Lemma \ref{thm:MovingRec}, the control problem
we must solve for this procedure is a discrete version of
(\ref{eqn:fAlphaBCM}). Thus the parameters for the discrete control
problem (\ref{eqn:discreteControl}) are $\tau = T$ and $b = K^T
Z_{T-t} h$. So we let $h_{\alpha,t}$ denote the solution to:
\begin{equation}
  \label{eqn:discreteMvngRec}
  ([P] [K^T] [P] + \alpha) [h_{\alpha,t}] = [P] [K^T] [Z_{T - t} f].
\end{equation} 
We finally approximate  $u^f(t,\cdot)|_{M(\Gamma,r)}$ by computing
$u^{h_{\alpha,t}}(T,\cdot)$, as described after
(\ref{eqn:discreteControl}).

For the discrete moving sources procedure we need a discrete version
of $L$. We partially discretize $L$ by applying the moving receivers
procedure to each the basis functions $\varphi_{1,j} \in S^T$, at
regularly spaced times $t_l = 0, \Delta t_s, ..., T$ and saving the
receiver measurements on a regularly spaced grid of points $p_k \in
[-\ell,\ell] \times [0, r] \subset M(\Gamma,r)$, where the spacing
between adjacent $p_k$ is equal to $\Delta x_s$ in both
directions. More explicitly, we let $h_{jl}$ denote the solution to
(\ref{eqn:discreteMvngRec}) with $f = \varphi_{1,j}$, $t = t_l$, and
$\alpha = 10^{-4}$. We then compute the wave $u^{h_{jl}}(T,\cdot)$
approximating $u^{\varphi_{1,j}}(t_l,\cdot)$ in $M(\Gamma,r)$ and save
the values of $u^{h_{jl}}$ at the points $p_k$. In total, we compute
the following data:
\begin{equation}
  \label{eqn:DiscreteL}
  \left\{ L \varphi_{1,j}(t_{l},p_{k}) = u^{h_{jl}}(T,p_k) : 
  \begin{array}{ll}
    j = 1,\ldots,N_{x,s}, &l=1,\ldots,N_{t,s}, \\
    k = 1,\ldots,N_{p} & \\
  \end{array}
  \right\}.
\end{equation}
Note that we do not explicitly compute $L\varphi_{i,j}$ for $i >
1$. We avoid carrying out these computations because
$L\varphi_{i,j}(t_l,\cdot) = L\varphi_{1,j}(t_{l-(i-1)},\cdot)$ for $l
\geq i$ and $L\varphi_{i,j}(t_l,\cdot) = 0$ for $l < i$. This follows
because the time between source applications coincides with the
temporal spacing between measurement times and because the wave
equation is time translation invariant.  Thus it would be redundant to
compute $L\varphi_{i,j}$ for all $i > 1$. Moreover, storing every such
value would increase the amount of data by a factor of $N_{t,s}$,
which would be prohibitively costly. 

Finally, we mention that for the discrete version of the moving
sources procedure, we must compute inner-products between
$L\varphi_{ij}$ and certain functions in $L^2([0,T] \times
M(\Gamma,r))$. To approximate these integrals we use a tensor product
of trapezoidal rules on the data (\ref{eqn:DiscreteL}).

\subsection{Moving receivers example}

We provide an example to demonstrate our moving receivers
procedure. For a source we use:
\begin{equation*}
  f(t,x) = \exp\left(-((t - t_c)^2 +(x - x_c)^2)/\sigma^2\right),
\end{equation*} 
with parameters $t_c = 0.25$, $x_c = 0.0$, and $\sigma = 0.1$.  We
solve (\ref{eqn:discreteMvngRec}) with $\alpha = 5 \cdot 10^{-5}$ for
several times $t$, and compare the results with the true wavefields in
Figure \ref{tbl:ResultImages}. Since $r = 0.5$, we note that, for $t >
0.5$, it would not be possible to directly simulate $u^f(t,\cdot)$
without knowing the metric in the complement of $M(\Gamma,r)$. Thus
the wavefield snapshots depicted in Figure \ref{tbl:ResultImages} with
$t > 0.5$ could not be directly simulated under our assumption that
the wave-speed is only known in $M(\Gamma,r)$. Of particular interest
are the snapshots with $t \geq 1.25$. There, we observe a reflection
off $\p M \setminus \Gamma$ that has traveled through the unknown set
$M \setminus M(\Gamma,r)$ before returning to the known set
$M(\Gamma,r)$, yet our moving receivers procedure was able to capture
this reflected wave-front.

\begin{table}[htb!]
\centering
\begin{tabular}{c c}
\toprule
True wavefield $u^f(t,\cdot)$ 
& Approximate wavefield $u^{h_{\alpha,t}}(T,\cdot)$ \\
\midrule
\includegraphics[width=2.3in]{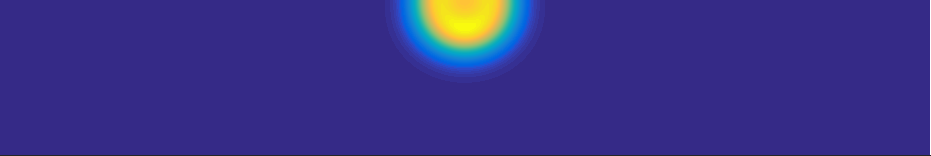}
& 
\includegraphics[width=2.3in]{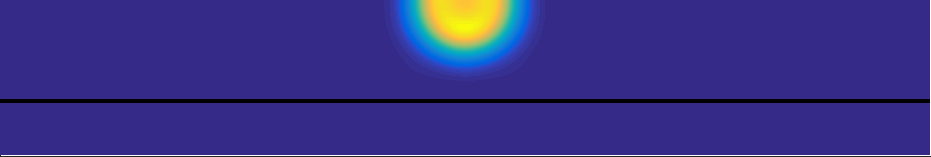}
\\
\includegraphics[width=2.3in]{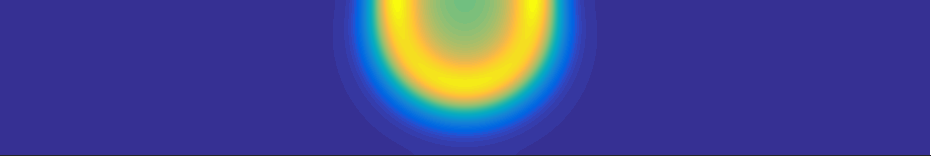}
& 
\includegraphics[width=2.3in]{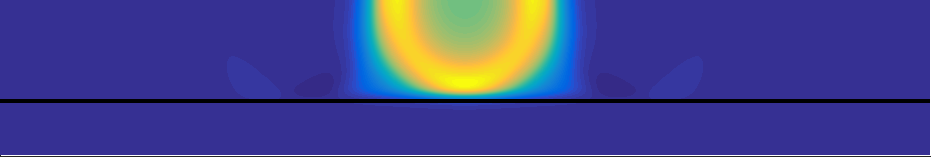}
\\
\includegraphics[width=2.3in]{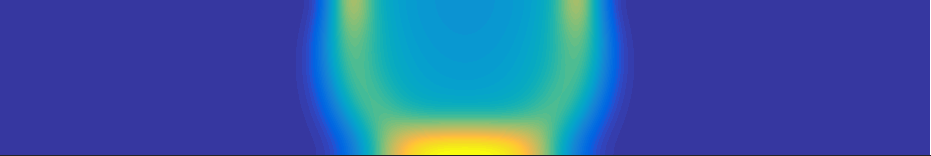}
& 
\includegraphics[width=2.3in]{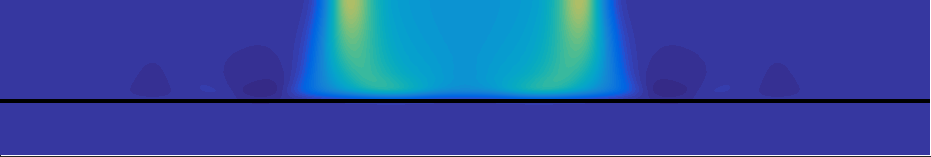}
\\
\includegraphics[width=2.3in]{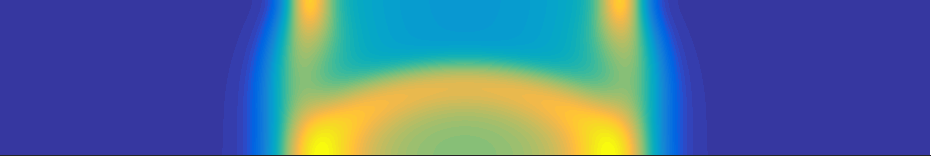}
& 
\includegraphics[width=2.3in]{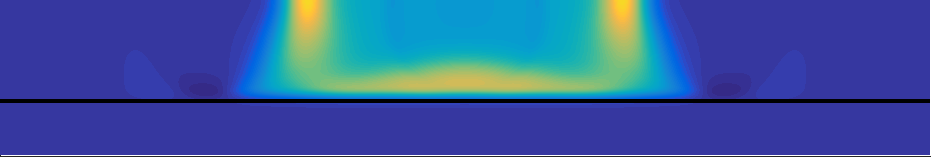}
\\
\includegraphics[width=2.3in]{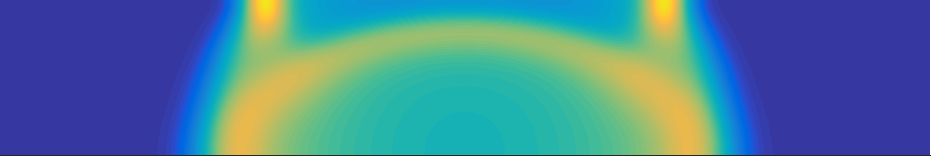}
& 
\includegraphics[width=2.3in]{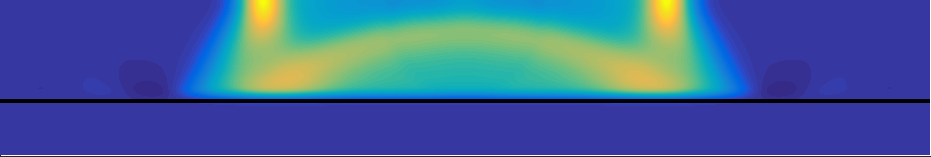}
\\
\includegraphics[width=2.3in]{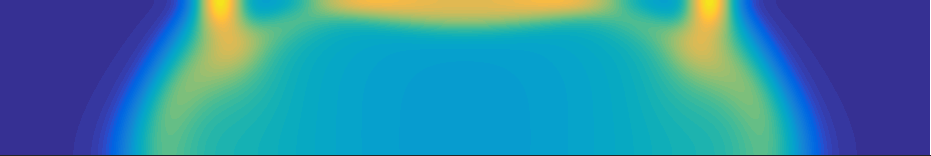}
& 
\includegraphics[width=2.3in]{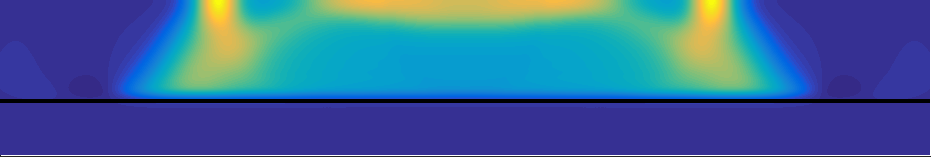}
\\
\includegraphics[width=2.3in]{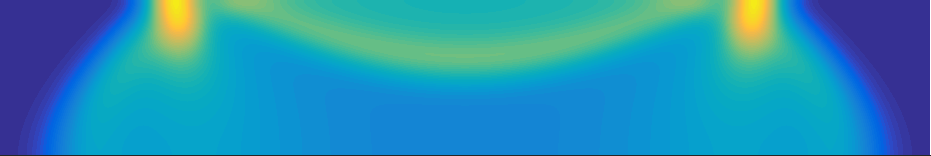}
& 
\includegraphics[width=2.3in]{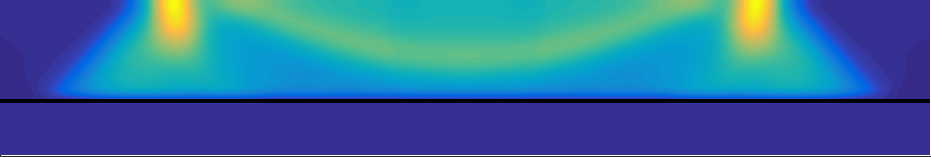}
\\

\bottomrule
\end{tabular}
\caption{True wavefields (left) along with wavefields obtained from
  the moving receivers procedure (right) at times $t = 0.5,
  0.75,\ldots,2.0$. Sources and receivers are placed in $\Gamma =
  [-3.1,3.1] \times \{0\}$, i.e. the top of the images. The known
  region is $M(\Gamma,r)$ with $r = 0.5$. In the snapshots, the known
  region corresponds to the rectangle $[-3.1,3.1] \times [-s, 0]$,
  where $s = e^{1/2} - 1 \approx 0.649$, above the solid black line.}
\label{tbl:ResultImages}
\end{table}

\subsection{Computational implementation of moving sources}
\label{sec:MovingSrcsModification}

To apply the moving sources procedure to a source $F \in L^2([0,T]
\times M(\Gamma,r))$ we need the quantity $\curlyK^* F$. The formula
(\ref{eqn:BlagoTypeInteriorOperatorExpr}) for computing $\curlyK$ uses
the quantity $L$, and as discussed above, it is costly to fully
dicretize $L$. In order to avoid this, we instead compute the action
of $\curlyK^*$ by transpostion. To that end, we note that $\curlyK^* F
= (W^{T/2})^*w^F(T/2,\cdot)$, thus it will suffice to approximate
$(W^{T/2})^*w^F(T/2,\cdot)$.

We first recall from Lemma \ref{lemma:Ltranspose} that $L^* F =
Rw^{RF}|_{[0,T] \times \Gamma}$. Thus, for a basis function $\varphi_i
\in S^T$ we have,
\begin{equation}
  \label{eqn:wFTrace}
  \langle \varphi_i, Rw^F \rangle_{L^2([0,T] \times \Gamma)} = \langle
  L\varphi_i, RF \rangle_{L^2([0,T]\times M(\Gamma,r))}.
\end{equation}
After applying the receiver moving technique to compute $L\varphi_i$,
we can compute the right hand side of this expression. Then,
(\ref{eqn:wFTrace}) allows us to compute the inner-product between
$Rw^F|_{[0,T]\times \Gamma}$ and any basis function, which allows us
to compute the coefficients of the projection of
$Rw^F|_{[0,T]\times\Gamma}$ onto $S^T$. Computing the function
associated with these coefficients and time-reversing the allows us to
approximate $w^F|_{[0,T]\times\Gamma}$.

We now return to the derivation of
(\ref{eqn:BlagoTypeInteriorOperatorExpr}) in order to show how to
approximate $(W^{T/2})^*w^F(T/2,\cdot)$.  Let us suppose that $F \in
C^\infty(M(\Gamma,r)\times[0,T/2])$ and $\varphi_i \in S^{T/2}$. Then,
we define $I_2(t,s) := \langle w^F(t,\cdot),u^{\varphi_i}(s,\cdot)
\rangle_{L^2(M)}$, and observe that $I_2(T/2,T/2) = \langle \varphi_i, (W^{T/2})^*w^F(T/2,\cdot)\rangle$ . We note that $I_2$ is defined analogously to $I$ from
the derivation of (\ref{eqn:BlagoTypeInteriorOperatorExpr}), the only
difference between these expressions is that we have exchanged the
roles of $t$ and $s$. Then, a similar computation to our earlier derivation
shows,
\begin{equation*}
  (\p_t^2 - \p_s^2)I_2(t,s) = \langle w^F(t), \p_\nu u^{\varphi_i}(s) \rangle_{L^2(\p M)}
  - \langle F(t), u^{\varphi_i}(s)\rangle_{L^2(M)}.
\end{equation*}
Applying the definition of $L$, noting that $\p_\nu u^{\varphi_i} =
\varphi_i$, and using the support properties of $\varphi_i$ and $F$ we
can rewrite this as,
\begin{equation*}
  (\p_t^2 - \p_s^2)I_2(t,s) = \langle w^F(t)|_{\Gamma},
  \varphi_i(s) \rangle_{L^2(\Gamma)} - \langle F(t), L\varphi_i(s)\rangle_{L^2(M(\Gamma,r))}.
\end{equation*}
We then use Duhamel's principle and set $t=s=T/2$ in the result to
obtain,
\begin{equation}
  \label{eqn:MvSrcTranspose}
  \begin{split}
    \langle \varphi_i, (W^{T/2})^*w^F(T/2,\cdot)\rangle_{L^2([0,T/2]\times\Gamma)}
    = &~\langle \varphi_i, J^T w^F|_{[0,T] \times
      \Gamma}\rangle_{L^2([0,T/2] \times \Gamma)} \\ & - \langle L\varphi_i,
    J^T F\rangle_{L^2([0,T/2] \times M(\Gamma,r))}.
  \end{split}
\end{equation}
To approximate $J^Tw^F|_{[0,T]\times\Gamma}$, we use the approximation
to $w^F|_{[0,T]\times\Gamma}$ computed from (\ref{eqn:wFTrace}) and
apply the definition of $J^T$. We compute the other term on the right
by directly applying (\ref{eqn:wFTrace}) with $J^T F$ in place of $F$.
Finally, we use the inner-products (\ref{eqn:MvSrcTranspose}) to
compute the coefficients of $(W^{T/2})^*w^F(T/2,\cdot)$ in the basis
for $S^{T/2}$.

We now describe our computational implementation of the moving sources
procedure.  Let us recall that our goal is, for a source $F \in
L^2([0,T/2]\times M(\Gamma,r))$, to approximate the wave $w^F$ in
$M[0,T/2] \times (\Gamma,r)$. By Lemma \ref{thm:MovingSrc}, our first
step in approximating $w^F(t,\cdot)|_{M(\Gamma,r)}$ is to solve a
discrete version of (\ref{eqn:IntSrcControl}). So we solve
(\ref{eqn:discreteControl}) with $\tau = T/2$ and $b = (W^{T/2})^*
w^{Z_{T/2-t} F}(T/2,\cdot)$.  That is, we compute
$h_{\alpha,t}$ by solving
\begin{equation}
  \label{eqn:discreteMvngSrc}
  ([P] [K^{T/2}] [P] + \alpha) [h_{\alpha,t}] = [P] [(W^{T/2})^*
    w^{Z_{T/2-t} F}(T/2,\cdot)],
\end{equation}
where we use (\ref{eqn:MvSrcTranspose}) to compute the right-hand side
of this expression.  Finally, we compute the wave
$u^{h_{\alpha,t}}(T/2,\cdot)$ as in the moving receivers
implementation.

\subsection{Moving sources results}

To demonstrate our moving sources procedure, we consider a source
\begin{equation}
  F(t,x,y) = \exp\left(-a((t - t_c)^2 + (x - x_c)^2 + (y - y_c)^2)\right),
\end{equation} 
where $t_c = 0.1$, $(x_c,y_c) = (0, 0.25)$, and $a = a_t$.  We use the
moving sources procedure to approximate
$w^{F}(t,\cdot)|_{M(\Gamma,r)}$ for several times $t$. That is, for
these $t$ we solve (\ref{eqn:discreteMvngSrc}) using $\alpha =
10^{-4}$ and compute the associated wavefield
$u^{h_{\alpha,t}}(T/2,\cdot)$ approximating $w^F(t,\cdot)$ in
$M(\Gamma,r)$. We compare the results of our procedure with the true
wavefields in Figure \ref{tbl:SrcResultImages}.

\begin{table}[htb!]
\centering
\begin{tabular}{c c}
\toprule
True wavefield $w^F(t,\cdot)$ & Approximate wavefield $u^{h_{\alpha,t}}(T,\cdot)$ \\
\midrule
\includegraphics[width=2.3in]{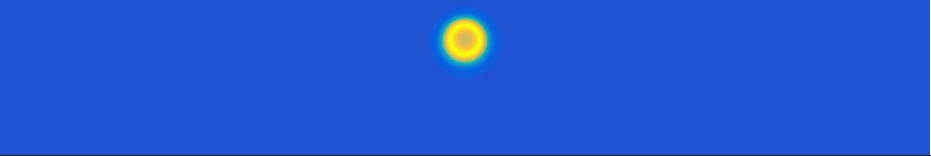}
& 
\includegraphics[width=2.3in]{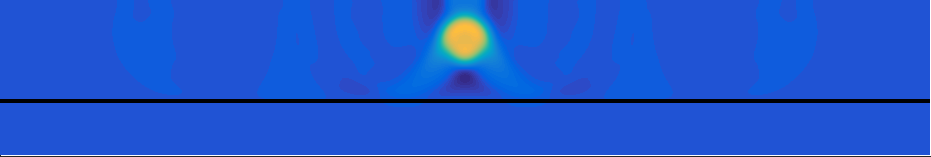}
\\
\includegraphics[width=2.3in]{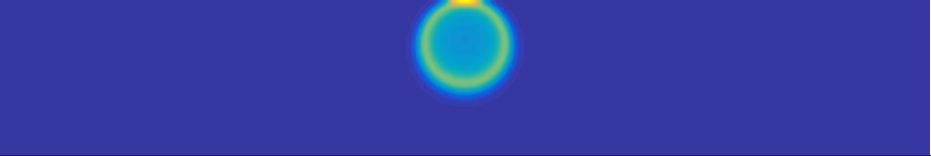}
& 
\includegraphics[width=2.3in]{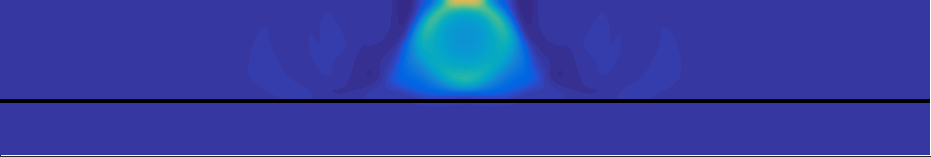}
\\
\includegraphics[width=2.3in]{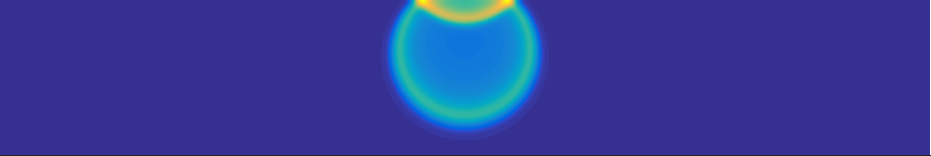}
& 
\includegraphics[width=2.3in]{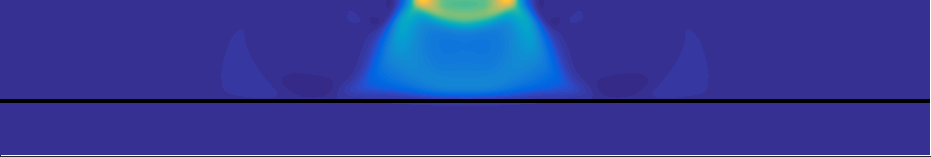}
\\
\includegraphics[width=2.3in]{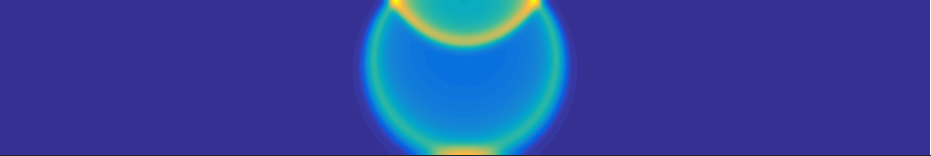}
& 
\includegraphics[width=2.3in]{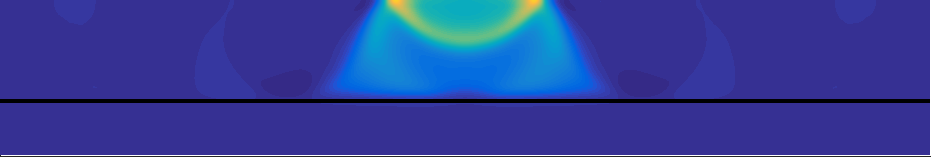}
\\
\includegraphics[width=2.3in]{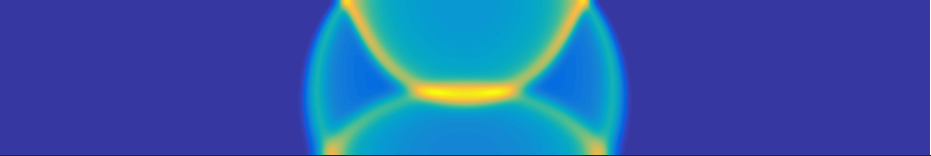}
& 
\includegraphics[width=2.3in]{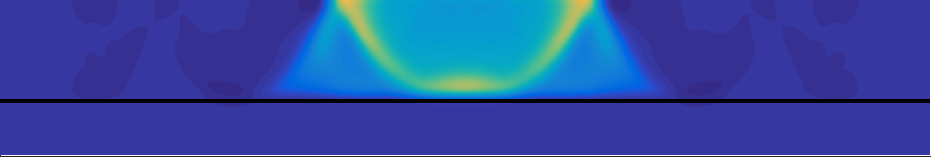}
\\
\includegraphics[width=2.3in]{{./draft_images/MovingSrcsNumerics/Hyperbolic_T_2_Y_1/original/original_0.750000_cropped}.png}
& 
\includegraphics[width=2.3in]{{./draft_images/MovingSrcsNumerics/Hyperbolic_T_2_Y_1/recovered/recovered_0.750000_cropped}.png}
\\
\includegraphics[width=2.3in]{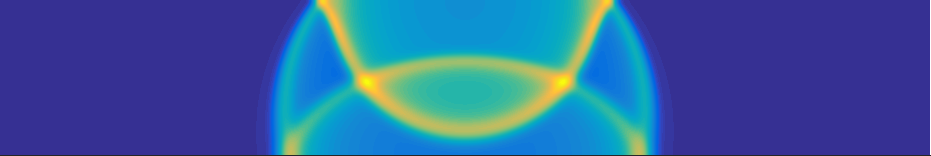}
& 
\includegraphics[width=2.3in]{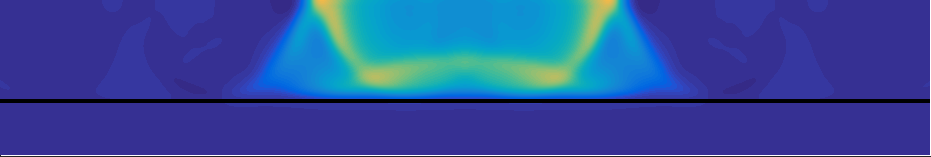}
\\
\includegraphics[width=2.3in]{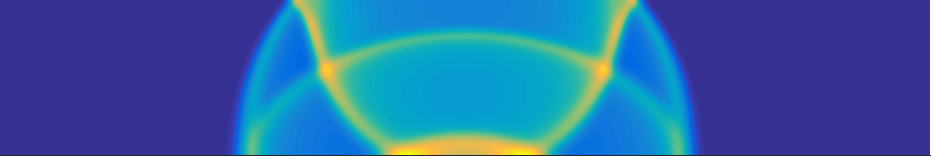}
& 
\includegraphics[width=2.3in]{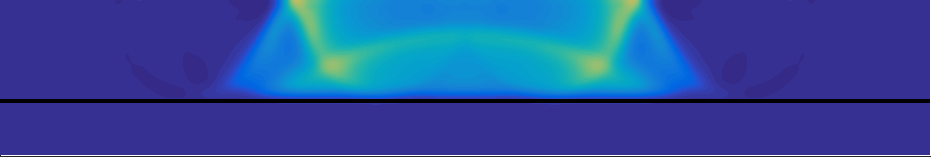}
\\

\bottomrule
\end{tabular}
\caption{We plot the true wavefields (left) along with wavefields
  obtained from the moving sources procedure (right) at times $t =
  0.125, 0.25, \ldots, 1.0$. We again note that, for the moving
  sources wavefields, the sources and receivers are placed in $\Gamma
  = [-3.1,3.1] \times \{0\}$, i.e. the top of the images. The
  known region corresponds to the rectangle $[-3.1,3.1] \times
  [-s,0]$, where $s \approx 0.649$, above the solid black line.}
\label{tbl:SrcResultImages}
\end{table}

\medskip
\noindent{\bf Acknowledgements.}  The authors express their gratitude
to the Institut Henri Poincar\'e where a part of this work has been
done.  The authors thank Jan Boman, Luc Robbiano, J\'{e}r\^{o}me Le
Rousseau and Daniel Tataru for their enlightening discussions.

\bibliographystyle{siamplain}
\bibliography{Bibliography}

\end{document}